\newcommand{\until}[1]{\{1,\dots, #1\}}
\newcommand{\subscr}[2]{#1_{\textup{#2}}}
\newcommand{\setdef}[2]{\{#1 \, : \; #2\}}
\newcommand{\map}[3]{#1: #2 \rightarrow #3}
\newcommand{\intersect}{\ensuremath{\operatorname{\cap}}}
\newcommand{\ba}{\begin{array}}
\newcommand{\ea}{\end{array}}
\newcommand{\be}{\begin{equation}}
\newcommand{\ee}{\end{equation}}
\newcommand{\mc}{\mathcal}
\newcommand{\Z}{\mathbb{Z}}
\newcommand{\1}{\mathbbm{1}}
\newcommand{\R}{\mathbb{R}}
\newcommand{\N}{\mathbb{N}}
\def\1{\mathbf{1}}
\def\Z{\mathbb{Z}}
\def\N{\mathbb{N}}
\def\R{\mathbb{R}}
\renewcommand{\natural}{\mathbb{N}}
\newcommand{\real}{\mathbb{R}}
\newcommand{\integernonnegative}{\mathbb{Z}_{\ge0}}
\newcommand{\Exp}{\mathbb{E}}
\renewcommand{\Pr}{\mathbb{P}}
\renewcommand{\deg}{d}
\newcommand{\V}{\mathcal{V}} % Graphs
\newcommand{\E}{\mathcal{E}}
\newcommand{\G}{\mathcal{G}}
\def\neigh{\mathcal{N}}
\def\L{\mathcal{L}}
\newcommand{\card}[1]{|#1|}
\newcommand{\xave}{\subscr{x}{ave}}
\newcommand{\trace}{\operatorname{tr}}
\newcommand{\diag}{\operatorname{diag}}
\newcommand{\circulant}{\operatorname{circ}}
\newcommand{\cayley}{\operatorname{cayl}}
\newcommand{\esr}{\operatorname{esr}}
\newcommand{\sr}{\operatorname{sr}}
\newcommand{\Reach}{\mathcal{R}}
\newcommand{\act}{\operatorname{Act}}
\newcommand{\rec}{\operatorname{Rec}}
\newcommand{\source}{\sigma}
\newtheorem{definition}{Definition}[section]
\newtheorem{theorem}{Theorem}[section]
\newtheorem{corollary}[theorem]{Corollary}
\newtheorem{lemma}[theorem]{Lemma}
\newtheorem{proposition}[theorem]{Proposition}
{ \theorembodyfont{\normalfont} %\theorembodyfont{\rmfamily}
\newtheorem{remark}{Remark}[section]
\newtheorem{example}[theorem]{Example}

}
\def\QEDopen{{\setlength{\fboxsep}{0pt}\setlength{\fboxrule}{0.2pt}\fbox{\rule[0pt]{0pt}{1.3ex}\rule[0pt]{1.3ex}{0pt}}}}
\def\QED{\QEDopen}
\title{Broadcast gossip averaging algorithms: interference and asymptotical error in large networks}
\author{Paolo Frasca \and Fabio Fagnani
\thanks{Paolo Frasca and Fabio Fagnani are with the Dipartimento
di Matematica, Politecnico di Torino, Corso Duca degli Abruzzi 24, 10129 Torino, Italy.
{\tt\small \{fabio.fagnani,paolo.frasca\}@polito.it}}%
}
\date{\today}
\begin{document}
\maketitle

\begin{abstract}
In this paper we study two related iterative randomized algorithms for distributed computation of averages. The first one is the recently proposed Broadcast Gossip Algorithm, in which at each iteration one randomly selected node broadcasts its own state to its neighbors. The second algorithm is a novel de-synchronized version of the previous one, in which at each iteration every node is allowed to broadcast, with a given probability: hence this algorithm is affected by interference among messages.
Both algorithms are proved to converge, and their performance is evaluated in terms of rate of convergence and asymptotical error: focusing on the behavior for large networks, we highlight the role of topology and design parameters on the performance. Namely, we show that on fully-connected graphs the rate is bounded away from one, whereas the asymptotical error is bounded away from zero. On the contrary, on a wide class of locally-connected graphs, the rate goes to one and the asymptotical error goes to zero, as the size of the network grows larger.
\end{abstract}
%\tableofcontents

%%%%%%%%%%%%%%%%%%%%%%%%%%%%%%%%%%%%%%%%%%%%%%%%%%%%%%%%%%%%%%%%%%%%%%%%%%%%%%%%%%%%%%%%%%%%%%%%%%%%%%%%%%%%%%%%%
%       Introduzione
%%%%%%%%%%%%%%%%%%%%%%%%%%%%%%%%%%%%%%%%%%%%%%%%%%%%%%%%%%%%%%%%%%%%%%%%%%%%%%%%%%%%%%%%%%%%%%%%%%%%%%%%%%%%%%%%%

\section{Introduction}
When it comes to perform control and monitoring tasks through
networked systems, a crucial role has to be played by algorithms
for distributed estimation, that is algorithms to collectively
compute aggregate information from locally available data. Among
these problems, a prototypical one is the distributed computation
of averages, also known as the average consensus problem. In the
average consensus problem each node of a network is given a real
number, and the goal is for the nodes to iteratively converge to a
good estimate of the average of these initial values, by
repeatedly communicating and updating their states.

Recently, an increasing interest has been devoted among the
control and signal processing communities to {\em randomized} algorithms able to solve the
average consensus problem. This is motivated because randomized
algorithms may offer better performance or robustness with respect
to their deterministic counterparts. As well, randomized
algorithms may require less or no synchronization among the nodes,
a property which is often difficult to guarantee in the applications. Moreover,
it may very well happen that the communication network itself be
random, thus implying the need for a stochastic analysis. These
facts are especially true when communication is obtained through a
wireless network. For these reasons, the present paper will study
the performance of a pair of notable randomized algorithms, in
terms of their ability to approach average consensus.
Among randomized algorithms, researchers have devised {\em gossip}
algorithms, in which at each iteration only a random subset of the
nodes performs communication and update. Among these algorithms,
the Broadcast Gossip Algorithm has been recently proposed: at each
time step one node, randomly selected from a uniform distribution
over the nodes, broadcasts its current value to its neighbors. Each of its
neighbors, in turn, updates its value to a convex combination of its previous value and the received one. This algorithm may
seem to require a significant synchronization, since the choice of
the broadcasting node has to be done at the global level. However,
it has been observed that this communication model is equivalent,
up to a suitable scaling of time, to assume that each node
broadcasts at time instants selected by a private Poisson process.
Nevertheless, this equivalence is no longer true if broadcasting
takes a finite duration of time. When this happens a node is, with
non-zero probability, the target of more than one simultaneous
communication and destructive collision may occur. This is
especially true in wireless communications, which have to share
their communication medium. Hence, the practical applicability of
this algorithm in a distributed system resides either on the
possibility to incorporate some nontrivial collision detection
scheme, or on the assumption that communications are
instantaneous. The first contribution of this paper is to relax
this assumption to allow a communication model in which more than
one node can broadcast at the same time, possibly implying the
interference of attempted communications. Thus, we introduce a
novel distributed randomized algorithm for average computation,
facing the issue of interference in communication.

As a second contribution, we study both the original broadcast
algorithm and the novel one, in terms of their asymptotical
estimation error. Note, indeed, that the iterations of broadcast
algorithms do not preserve the average of states, and in general
do not converge to the initial average. Hence, it is crucial for
the application to estimate such bias. In this paper, we prove
that on sparse graphs with bounded degree, both algorithms are
asymptotically unbiased, in the sense that the asymptotical errors
go to zero as the network grows larger. Instead, on complete
graphs, both algorithms are asymptotically biased. Moreover, for
both algorithms we investigate significant trade-offs between
speed of convergence and asymptotical performance. Our results are
obtained via a mean square analysis, under the assumption that the
communication graph possesses some structural symmetries, namely,
that it is the Cayley graph of an Abelian group.

\subsection*{Related works}
In latest years, many papers have dealt with distributed
estimation and synchronization in networks. The latter problem,
which partly motivates the interest for our algorithms, has been
studied in several papers, also using approaches based on
consensus, as in~\cite{QL-DR:06,RC-AC-LS-SZ:08b}. Namely, an
increasing interest has been devoted to randomized averaging algorithms. An influential {\em pairwise}
gossip communication model is introduced in~\cite{DK-AD-JG:03}
and~\cite{SB-AG-BP-DS:06}. Moreover, the interest for wireless
networks has induced several authors to consider gossip models
based on {\em broadcast}, rather than on pairwise
communication~\cite{FF-SZ:08a,TCA-MEY-ADS-AS:09}. Other gossip
models have been developed, for instance,
in~\cite{FB-AGD-PT-MV:07,AGD-ADS-MJW:08,DM-DS:08,PD-FB-PT-MV:08}.

Broadcast and wireless communication inherently imply the issue of
interference among simultaneous communications. This has been
considered since early times: actually, our communication model
can be easily related with the \emph{slotted ALOHA} protocol
illustrated in~\cite{NA:85}. More recently, the negative effects
of interference on the connectivity of wireless networks have been
discussed in various papers, for
instance~\cite{OD-FB-PT:05,OD-MF-NM-RM-PT:06}.
The role of interference and message collisions in consensus
problems, and the effectiveness of countermeasures, has been
already investigated in the computer science
community~\cite{GC-MD-SG-NL-CN-TN:08}: in the latter paper,
consensus has to be achieved on a variable belonging to a finite
set. Several related papers about real-valued average consensus
problems have also appeared: we shall briefly review some of them.
A few paper are concerned with design of communication protocols
dealing with message collisions: for instance, \cite{SK-AS-RJT:07}
presents a data driven architecture which grants channel access to
nodes based on their local data values. In~\cite{YWH-AS:06}, a
consensus algorithm allowing simultaneous quantized transmissions
has been proposed: the issue of collisions is avoided by a
suitable data-dependent coding scheme. In~\cite{BN-AGD-MG:08},
local additive interference is considered, and under some
technical assumptions a collaborative consensus algorithm is
proposed, which allows simultaneous transmissions and ensures
energy savings. Besides interference, related robustness results
against data losses in distributed systems have been presented,
for instance, in~\cite{VS-MA-OS:06,FF-SZ:08}.
Finally, the paper~\cite{TCA-ADS-AGD:09} proposes a related
communication model, in which the broadcasted values are received
or not with a probability which depends on the transmitter and
receiver nodes, rather than on the activity of their neighbors.

\smallskip
Our results build on the general mean square analysis developed
for randomized consensus algorithms
in~\cite{FF-SZ:07ita,FF-SZ:08a}, and on results in algebra, linear
algebra and probability theory. Indeed, our main results will
assume that the network topology exhibits specific symmetries, in
the sense that it is represented by the Cayley graph of an Abelian
group. Cayley graphs have a long history in abstract
mathematics~\cite{LB:79}, and they have been recently used in
control theoretical applications, for instance
in~\cite{BR-RDA:04,FG-SZ:09}, to describe
communication networks. Assuming Abelian Cayley topologies is
motivated by their algebraic structure, which allows a formal
mathematical treatment, as well as by the potential applications.
Indeed, Abelian Cayley graphs are a simplified and idealized
version of communications scenarios of practical interest. In
particular, they capture the effects on performance of the strong
constraint that, for many networks of interest, communication is
local, not only in the sense of a little number of neighbors, but
also with a bound on the geometric distance among connected
agents. This is especially true for wireless networks: indeed,
Abelian Cayley graphs have been related, for instance
in~\cite{SB-AG-BP-DS:06,SR:07,RC-FG-SZ:09}, to other models for
wireless networks, as random geometric graphs or
disk-graphs~\cite{MF-RM:07}. Moreover, it has to be noted that
several topologies which appear in the applications are themselves
Abelian Cayley: for instance, complete graphs, rings, toroidal
grids, hypercubes. Finally, our main findings about the
asymptotical error are based on a result in~\cite{FF-JCD:10} about
the limit of the invariant vectors of sequences of stochastic
matrices: related results dealing with comparison of Markov chains
have appeared, for instance, in~\cite{JAF:91,PD-LSC:93}.

\subsection*{Paper structure}\label{sec:structure}
After presenting the averaging problem and the algorithms under
consideration in Section~\ref{sec:statement}, we develop our
analysis tools in Section~\ref{sec:prelim}. Later,
Sections~\ref{sec:NoColl} and~\ref{sec:Coll} are devoted to
analyze the original broadcast gossip algorithm and the novel one,
respectively. Some concluding remarks are presented in
Section~\ref{sec:conclusion}.

\subsection*{Notations and preliminaries}\label{sec:notations}
Given a set $\V$ of finite cardinality $\card{\V}=N$, we define a
graph on this set as $\G=(\V,\E)$, where $\E\subseteq\V\times\V$
(we exclude the presence of self-loops, namely edges of type
$(u,u)$). Given $u,v\in\V$, if $(v,u)\in\E$, we shall say that $v$
is an in-neighbor of $u$, and conversely $u$ is an out-neighbor of
$v$. We will denote by $\neigh^+_u$ and $\neigh^-_u$, the set of,
respectively, the out-neighbors and the in-neighbors of $u$. Also,
$\deg^+_u=\card{\neigh^+_{u}}$ and $\deg^-_u=\card{\neigh^-_{u}}$, are said to be the out-degree and the in-degree of node
$u$, respectively.
A graph whose nodes all have in-degree $k$ is said to be $k$-regular.
A graph is said to be (strongly) connected if for any pair of
nodes $(u,v)$, one can find a path, that is an ordered list of
edges, from $u$ to $v$. A graph is said to be symmetric, if $(u,v)\in\E$ implies $(v,u)\in \E$. In a symmetric
graph, being the neighborhood relation symmetrical, there is no
distinction between in- and out-neighbors and we will drop,
consequently, the index $+$ and $-$. We let $\1$ be the $N-$vector
whose entries are all $1$, $I$ be the $N\times N$ identity matrix,
and $\Omega:=I-N^{-1}\1\1^*$. Given a $N$-vector $a$, we denote by
$\diag(a)$ the diagonal matrix whose diagonal is equal to $a$. The
adjacency matrix of the graph $\G$, denoted by $A_{\G}$, is the
matrix in $\{0,1\}^{\V\times\V}$ such that ${A_\G}_{uv}=1$ if and
only if $(v,u)\in \E$. We also define the out-degree matrix as $D^+_{\G}:=\diag{(A_{\G}^*\1)}$, the in-degree matrix as
$D^-_{\G}:=\diag{(A_{\G}\1)}$, and the Laplacian matrix as
$L_\G=D^-_\G-A_\G$. The subscript $\G$ will be usually skipped for
the ease of notation. Given a matrix $M\in\R^{\V\times\V}$, we
define the graph ${\cal G}_M=(\V, \E_M)$ by putting
$(v,w)\in \E_M$ iff $v\neq w$ and $M_{wv}\neq 0$. A matrix $M$ is
said to be {\em adapted} to the graph $\G=(\V,\E)$ if $\G_M\subseteq \G$, that is if $\E_M\subseteq \E$.

\medskip
When it comes to compare two sequences $\{a_n\}_{n\in\natural}$
and $\{b_n\}_{n\in\natural}$, we shall write that $a_n=o(b_n)$ if
$\limsup_{n}{\frac{|a_n|}{|b_n|}}=0$, that $a_n=O(b_n)$ if
$\limsup_{n}{\frac{|a_n|}{|b_n|}}<+\infty$, and that
$a_n=\Theta(b_n)$ if there exist $\bar n\in\natural$ and positive
scalars $c_1,c_2$ such that $c_1 b_n\le a_n\le c_2 b_n,$ for $n\ge
\bar n.$

\medskip
Given a linear operator $\L$ from a vector space to itself, for
instance represented by a square matrix, we denote by $\sr(\L)$
its spectral radius, that is the modulus of its largest in
magnitude eigenvalue. Whenever $\sr(\L)=1,$ we shall define as
$\esr(\L)$ the modulus of the second largest eigenvalue in
magnitude.
\bigskip

%%%%%%%%%%%%%%%%%%%%%%%%%%%%%%%%%%%%%%%%%%%%%%%%%%%%%%%%%%%%%%%%%%%%%%%%%%%%%%%%%%%%%%%%%%%%%%%%%%%%%%%%%%%%%%%%%
%       Problem statement
%%%%%%%%%%%%%%%%%%%%%%%%%%%%%%%%%%%%%%%%%%%%%%%%%%%%%%%%%%%%%%%%%%%%%%%%%%%%%%%%%%%%%%%%%%%%%%%%%%%%%%%%%%%%%%%%%
%\clearpage
\section{Broadcast gossip averaging algorithms}\label{sec:statement}
%\subsection{Averaging problem}

In this section we present the averaging problem and the
algorithms we are dealing with. Let us be given a graph
$\G=(\V,\E)$ and a vector of real values $x\in\real^\V$, assigned
to the nodes.
 Then, the averaging problem consists in approximating the average $\frac{1}{N}\sum_{v\in\V}x_v$, with the constraint that at each time step each node $v$ can
communicate its current state to its out-neighbors only. The simplest
solution to this problem consists in an iterative algorithm such
that $x(0)=x$, and for all $t\in\integernonnegative$, $x(t+1)=P
x(t)$, where $P\in \real^{\V\times\V}$ is a doubly stochastic
matrix adapted to $\G$. Provided the diagonal of $P$ is non-zero,
and the graph $\G$ is connected, the algorithm solves the
averaging problem, in the sense that for every $u\in \V$,
$$\lim_{t\to+\infty}x_u(t)=\xave(t),$$ where by definition $\xave(t)=\frac{1}{N}\sum_{v\in\V}x_v(t)$.

However, it is clear that this algorithm potentially requires
synchronous communication along all the edges of the graph. As
this requirement may be difficult to meet in real applications, in
this paper we shall study algorithms which require little or no
synchronization among the agents.
We assume from now on that the communication network be
represented by a graph, denoted by $\G=(\V,\E)$, whose
adjacency and Laplacian matrix will be denoted by $A$ and $L$,
respectively.

\bigskip
We start recalling the Broadcast Gossip Algorithm \cite{FF-SZ:08a,TCA-MEY-ADS-AS:09}. In this algorithm, at each time step one
node, randomly selected from a uniform distribution over the
nodes, broadcasts its current value to its neighbors. Its
neighbors, in turn, update their values to a convex combination of
their previous values and the received ones. More formally, we can
write the algorithm as follows. Note that the only design
parameter is the weight given to the received value in the convex
update.

%\begin{algorithm}[Broadcasting Gossip]\label{algo:NoColl}
%\normalfont
%\textup{Parameters:} $q$.\\
%For each $t\in\integernonnegative$, select a node $v\in\V$ uniformly at random.
%\begin{align*}
%    &x_u(t+1)=(1-q)x_u(t)+q x_v(t), &\IF \,u\in \neigh_v\\
%    &x_u(t+1)=x_u(t) & \IF \,u\notin \neigh_v.
%\end{align*}
%\end{algorithm}

\renewcommand\footnoterule{\hrule width \textwidth height .4pt}

\bigskip
\footnoterule
\vspace{.75\smallskipamount}
\noindent\hfill\textbf{Broadcast Gossip Algorithm}
\textendash{} Parameters: $q\in (0,1)$
\hfill\vspace{.75\smallskipamount}
\footnoterule\vspace{.75\smallskipamount}
\noindent For all $t\in\integernonnegative$,
\begin{algorithmic}[1]
\STATE Sample a node $v$ from a uniform distribution over $\V$%
\FOR {$u\in \V$}
    \IF{$\,u\in \neigh^+_v$}
    \STATE $x_u(t+1)=(1-q)x_u(t)+q x_v(t)$ \ELSE %
    \STATE $x_u(t+1)=x_u(t)$
    \ENDIF
\ENDFOR
\end{algorithmic}
\vspace{0.5\smallskipamount}\footnoterule\bigskip

This algorithm can also be written in the form of iterated matrix
multiplication. Let $v$ be the broadcasting node which has been
sampled at time $t$. Then, $x(t+1)=P(t)x(t),$ where
\begin{equation}\label{eq:P-BGA}
P(t)=I+q \sum_{u\in\neigh^+_v}(e_ue_v^*-e_ue_u^*),
\end{equation}
and $e_i$ is the $i$-th element of the canonical basis of
$\real^\V$. Clearly, at each time $t$, the matrix $P(t)$ is the
realization of a uniformly distributed random variable, depending
on the stochastic choice of the broadcasting node.

The Broadcast Gossip algorithm has received a considerable, and
has been extensively studied in~\cite{TCA-MEY-ADS-AS:09}: in that
paper, under the assumption that the communication graph is
symmetric, the algorithm is shown to converge, and its speed of
convergence is estimated. Instead, in this paper we shall
concentrate on another crucial analysis question. Since the
algorithm does not preserve the average of states $\xave$ through
iterations, how far from the initial average the convergence value
will be?

\bigskip
As we noted in the introduction, the practical interest of the BGA
algorithm depends on the assumption that the transmissions are
instantaneous, and reliable. In an effort towards more realistic
communication models, we propose a modification of the Broadcast
Gossip Algorithm, which has the feature of dealing with the issue
of finite-length transmissions, and consequent packet losses due
to collisions.

At each time step, \emph{each} node wakes up, independently with
probability $p$, and broadcasts its current state to all its
out-neighbors. It is clear that some agents can be the target of more
than one message: in this case, we assume that a destructive
collision occurs, and no message is actually received by these
agents. Moreover, interference prevents the broadcasting nodes
from hearing any others ({\it half-duplex} constraint\footnote{The
{half-duplex} constraint is assumed throughout the paper: however,
dropping it would imply minimal changes in the analysis.}). If an
agent $u\in\V$ is able to receive a message from agent $v$, it
updates its state to a convex combination with the received value,
similarly to the standard BGA.
%\begin{align*}     &x_u(t+1)=(1-q)x_u(t)+q x_v(t).\end{align*}

More formally, the algorithm is as follows.
%\begin{algorithm}[Collision Broadcasting Gossip]\label{algo:Coll}
%\normalfont
%\textup{Parameters:} $p,q$.\\
%For each $t\in \N$.
%\begin{enumerate}
%\item Every node $v\in\V$ activates with probability $p$.
%  Let
%  $$\act:=\setdef{v\in\V}{v \text{ activates }};$$
% $$\rec:=\setdef{u\in\V}{\card{\neigh_u \intersect}=1, u\not \in \act}.$$
% Each node $u$ in $\rec$ is contacted by only one other node. Let it be denoted by $\source(u).$
%
%\item States are updated as follows:
%\begin{align*}
%    &x_v(t+1)=(1-q)x_v(t)+q x_{\source(v)}(t), & \text{ if } v \in \rec\\
%    &x_v(t+1)=x_v(t) & \text{ if } v \notin \rec.
%\end{align*}
%\end{enumerate}
%\end{algorithm}

\medskip\footnoterule\vspace{.75\smallskipamount}
\noindent\hfill\textbf{Collision Broadcast Gossip Algorithm}
\textendash{} Parameters: $q\in (0,1)$, $p\in (0,1)$
\hfill\vspace{.75\smallskipamount}
\footnoterule\vspace{.75\smallskipamount}
\noindent For all $t\in\integernonnegative$,
\begin{algorithmic}[1]
\STATE let $\act$ %$\act:=\setdef{v\in\V}{v \text{ activates }}$
be the random set defined by: %
  for every $v\in\V$, $\Pr[v\in \act]=p$%
\STATE let $\rec:=\setdef{u\in\V}{\card{\neigh^-_u \intersect \act}=1, u\not \in \act}$%
\STATE for all $u\in \rec$, let $\source(u)$ be the only $v\in \V$ such that $v\in \act\intersect \neigh^-_u$%
\FOR {$u\in \V$}
    \IF{$\,u\in \rec$}
    \STATE $x_u(t+1)=(1-q)x_u(t)+q x_{\source(u)}(t)$ \ELSE %
    \STATE $x_u(t+1)=x_u(t)$
    \ENDIF%
\ENDFOR
\end{algorithmic}
\vspace{0.5\smallskipamount}\footnoterule\bigskip

Also the latter algorithm can be written as matrix multiplication, defining
\begin{equation}\label{eq:P-CBGA}
P(t)=I+q \sum_{(v,u)\in\,\act\times\rec}(e_ue_v^*-e_ue_u^*).
\end{equation}

Both algorithms can actually be rewritten in the following
graph-theoretic way. Let ${\cal G}(t)$ be the subgraph of $\cal G$
depicting the communications taking place at a certain instant
$t$: the pair $(u,v)$ is an edge in ${\cal G}(t)$ iff $v$ successfully
receives a message from $u$ at time $t$. Denote by $A(t)$, $D(t)$, $L(t)$ the
adjacency, degree and Laplacian matrices, respectively, of $\G(t)$. Clearly, for both algorithms:
\begin{equation}\label{eq:P-general}
P(t)=I-qL(t)
\end{equation}

%Equivalently, we may define a time-dependent adjacency matrix
%$A(t)$, corresponding to the graph depicting successful
%communications at time $t$, that is $(A(t))_{uv}=1$ if and only if
%$u$ receives a message from $v$. We define also the matrix
%$D(t):=\diag(A(t)\1)$, and we remark that $D_{uu}(t)\in\{0,1\}$
%for any $u\in\V$ and $t\in \integernonnegative$, meaning that $u$
%may only receive either zero or one message per time step. With
%these definitions, we can also write
%\begin{equation}\label{eq:P-CBGA-bis}
%$P(t)=I+q\,(A(t)-D(t)).$
%\end{equation}.
%
Several questions are natural for the collision-prone CBGA
algorithm, in comparison with its synchronous collision-less
counterpart. Does the algorithm converge? How fast? Does it
preserve the average of states? If not, how far it goes? Is
performance poorer because of interferences?

We are going to answer the analysis questions we have posed, via a
mean square analysis of the algorithm. Our interest will be mostly
devoted to the properties of algorithms for large networks. To
this goal, we shall often assume to have a sequence of graphs
$\G_N$ of increasing order $N\in\N$, and we shall consider, for
each $N\in\natural$, the corresponding matrix $P(t)$, which
depends on $\G$ and then on $N$. Thus we will focus on studying
the asymptotical properties of the algorithms as $N$ goes to
infinity.

%%%%%%%%%%%%%%%%%%%%%%%%%%%%%%%%%%%%%%%%%%%%%%%%%%%%%%%%%%%%%%%%%%%%%%%%%%%%%%%%%%%%%%%%%%%%%%%%%%%%%%%%%%%%%%%%%
%       Preliminari
%%%%%%%%%%%%%%%%%%%%%%%%%%%%%%%%%%%%%%%%%%%%%%%%%%%%%%%%%%%%%%%%%%%%%%%%%%%%%%%%%%%%%%%%%%%%%%%%%%%%%%%%%%%%%%%%%
%\clearpage
\section{Mathematical models and techniques}\label{sec:prelim}
In this section we lay down some mathematical tools that can be
used to analyze gossip and other randomized algorithms. Namely, in
Subsection~\ref{sec:MSA} we review the mean square analysis
in~\cite{FF-SZ:08a}, which is going to be applied to the BGA and
CBGA algorithms in Sections~\ref{sec:NoColl} and~\ref{sec:Coll},
respectively. Later, in Subsection~\ref{sec:Cayley-intro} we
introduce Abelian Cayley graphs and their properties, and in
Subsection~\ref{sec:LocalPerturb} we present perturbation results
about sequences of stochastic matrices and their invariant
vectors.

\subsection{Mean square analysis}\label{sec:MSA}
Motivated by the interpretation of the broadcast algorithms as
iterated multiplications by random matrices, given in
Equations~\eqref{eq:P-BGA} and~\eqref{eq:P-CBGA}, in this
subsection we shall recall from~\cite{FF-SZ:08a} some definitions
and results for the analysis of randomized schemes, in which the
vector of states $x(t) \in \R^\V$ evolves in time following an
iterate $x(t+1)=P(t) x(t),$ where
$\{P(t)\}_{t\in\integernonnegative}$ is a sequence of i.i.d.
stochastic-matrix-valued random variables. Consequently, $x(t)$ is
a stochastic process.

The sequence $P(t)$ is said to achieve {\em probabilistic
consensus} if for any $x(0)\in \R^\V$, it exists a scalar random
variable $\alpha$ such that almost surely $\lim_{t\to
\infty}x(t)=\alpha\1.$ The following result, proved in
\cite{FF-SZ:08a}, is a simple and effective tool to prove the
convergence of these randomized linear algorithms. Let
$\bar{P}:=\Exp[P(t)]$.
\begin{proposition}[Probabilistic consensus criterion]\label{prop:ConvCrit}
If $P(t)$ is such that the graph $\G_{\bar{P}}$ is strongly
connected and, for all $v\in\V$, almost surely $P(t)_{vv}>0$, then
$P(t)$ achieves probabilistic consensus.
\end{proposition}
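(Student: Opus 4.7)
The plan is to show that the range functional $r(x(t)) := \max_v x_v(t) - \min_v x_v(t)$ converges to zero almost surely, which, together with the pathwise monotonicity of $\min_v x_v(t)$ and $\max_v x_v(t)$, forces $x(t)\to \alpha \1$ a.s.\ for some random $\alpha$. The monotonicity is immediate: since every realization of $P(t)$ is row-stochastic, each entry of $P(t)x(t)$ is a convex combination of entries of $x(t)$, so $\min_v x_v(t)$ is pathwise non-decreasing and $\max_v x_v(t)$ is pathwise non-increasing. In particular $x(t)$ stays in a compact box, the two monotone sequences converge a.s.\ to random limits $\alpha^-\le\alpha^+$, and $r(x(t))\to 0$ combined with the squeeze $\min_v x_v(t)\le x_v(t) \le \max_v x_v(t)$ collapses $\alpha^-=\alpha^+=:\alpha$ and gives the consensus.

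The core of the argument is a combinatorial claim: there exist $T\in\N$, $\epsilon>0$, and $\eta>0$ such that with probability at least $\eta$ the product $\Pi:=P(T-1)\cdots P(0)$ has every entry $\geq \epsilon$. I would obtain this by combining the two hypotheses. Strong connectivity of $\G_{\bar{P}}$ yields, for every ordered pair $(v,w)\in \V\times\V$, a path $v=u_0,u_1,\dots,u_L=w$ of length $L\le N-1$ in $\G_{\bar{P}}$; each of its edges has positive $\bar{P}$-entry and hence is realized with positive probability by some matrix in the support of $P(t)$. The hypothesis that $P(t)_{vv}>0$ a.s.\ ensures that every realization in the support has strictly positive diagonal, so any node can ``wait in place'' at any step using any realization. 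Processing one pair at a time and padding the other pairs with waits, one constructs a deterministic schedule $(\pi_0,\dots,\pi_{T-1})$ of realizations of $P(\cdot)$, with $T$ depending only on $\G_{\bar{P}}$, such that $\pi_{T-1}\cdots \pi_0$ has every entry strictly positive; by independence, the event that $P(k)=\pi_k$ for every $k$ (or lies in a positive-measure neighborhood of $\pi_k$, in the non-discrete case) has positive probability $\eta$, and then $\epsilon$ is any lower bound on the entries of $\pi_{T-1}\cdots\pi_0$. A short calculation then shows that any row-stochastic $\Pi$ with all entries $\geq \epsilon$ contracts the range: writing $v^*$ for an index minimizing $x$, $(\Pi x)_w\le \max_v x_v - \Pi_{w,v^*}(\max_v x_v-\min_v x_v)\le \max_v x_v - \epsilon\, r(x)$, and symmetrically $(\Pi x)_w\ge \min_v x_v+\epsilon\, r(x)$, so that $r(\Pi x)\le (1-2\epsilon)\,r(x)$.

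Partitioning time into consecutive blocks of length $T$, the events ``block $k$ yields a product with all entries $\geq\epsilon$'' are i.i.d.\ with probability $\geq\eta$; by the second Borel--Cantelli lemma infinitely many of them occur almost surely, and combined with the monotonicity of $r(x(t))$ this forces $r(x(t))\to 0$ a.s.\ and completes the proof via the squeeze of the first paragraph. The main obstacle throughout is the combinatorial step: producing a single deterministic schedule whose product has every entry simultaneously positive, rather than the easier pair-by-pair statement that ``with positive probability the $(v,w)$ entry of $\Pi$ is positive'' for each $(v,w)$ separately. The cleanest route is to observe that the hypotheses imply primitivity of $\bar{P}$ (strong connectivity together with the a.s.\ positive diagonal) and to translate this primitivity into a single positive-probability realization by the path-by-path padding argument sketched above.
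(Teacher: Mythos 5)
Your proof is correct in substance, and there is nothing internal to compare it against: the paper states this proposition without proof, quoting it from~\cite{FF-SZ:08a}, and your argument (pathwise monotonicity of $\max$ and $\min$, a positive-probability block of length $T$ whose product is entrywise $\geq\epsilon$, the range contraction $r(\Pi x)\le(1-2\epsilon)r(x)$, and the second Borel--Cantelli lemma over disjoint i.i.d.\ blocks) is the standard route and essentially the one used in that reference. The combinatorial step you identify as the crux does work: concatenating, pair by pair, a path schedule for each ordered pair and exploiting positive diagonals via $(AB)_{wv}\ge A_{ww}B_{wv}$ and $(AB)_{wv}\ge A_{wv}B_{vv}$ makes every entry of the long product simultaneously positive. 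The one step you should tighten is the passage from ``$P(t)_{vv}>0$ almost surely'' and ``$\bar P_{wv}>0$'' to a single event of positive probability on which the scheduled entries are \emph{uniformly} bounded below: the hypothesis gives no deterministic lower bound on the diagonal, and a point of the support of $P(t)$ realizing a prescribed edge need not itself have all diagonal entries positive, so ``a positive-measure neighborhood of $\pi_k$'' is not quite the right object. Instead, for each step note that $\Exp[P(k)_{u_{k+1}u_k}]>0$ gives $\Pr[P(k)_{u_{k+1}u_k}>0]=:\rho>0$, choose $\delta>0$ so small that $\Pr[P(k)_{u_{k+1}u_k}>\delta]\ge\rho/2$ and $\Pr[\min_u P(k)_{uu}>\delta]\ge 1-\rho/4$, and work on the intersection of these two events, which has probability at least $\rho/4$; intersecting over the $T$ steps of your schedule yields an event of positive probability on which the product is entrywise at least $\delta^{T}$. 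With that repair the rest of your argument goes through verbatim.
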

For the rest of this section, we shall assume that the assumptions
of Proposition~\ref{prop:ConvCrit} are satisfied. Then, in order
to describe the speed of convergence of the algorithm, let us
define $d(t):=N^{-1}\|x(t)-\xave(t)\1\|^2,$ and the rate of
convergence as
\begin{equation}\label{eq:R}
    R:=\sup_{x(0)}\limsup_{t\rightarrow +\infty} \Exp[d(t)]^{1/t}.
\end{equation}

Let us consider the (linear) operator $\map{\L}{\real^{\V\times \V}}{\real^{\V\times \V}}$ such that $$\L(M)=\Exp[P(t)^* M P(t)].$$
%By this operator, we recursively define the sequence of matrices $\Delta(t)$ by
%%\begin{align*}
%%&\Delta(0)=\Exp[P(0)^*\Omega P(0)],\\
%%&\Delta(t+1)=\Exp[P(t)^*\Delta(t) P(t)].
%%\end{align*}
%\begin{align*}
%&\Delta(0)=\Omega,\\
%&\Delta(t+1)=\L(\Delta(t)).
%\end{align*}
Notice that $\Exp[d(t)]=\Exp[x^*(t)\Omega x(t)]$, and that $\Exp[x^*(t)\Omega x(t)]=x^*(0)\Delta(t) x(0),$ with $\Delta(t)=\L^t(\Omega).$
Let $\Reach$ be the reachable space of the pair $(\L,\Omega)$. Then, $$R=\sr(\L_{|\Reach}),$$
and it has been proved in~\cite{FF-SZ:08a} that the rate can be estimated in terms of eigenvalues of $N\times N$ matrices, as
%Moreover, the following result~\cite[Proposition~4.4]{FF-SZ:08a} gives estimates of the rate.
%\begin{proposition}[Rate estimates]\label{prop:RateBounds}
\begin{equation}\label{eq:bounds}
\esr(\overline{P})^2\leq R \leq \sr(\L(\Omega)).
\end{equation}
%\end{proposition}

It is clear that, if all $P(t)$ matrices are doubly stochastic,
$x(t)$ converges to the initial average of states $\xave(0)$. If
they are not, as in the cases we are studying in this paper, it is
worth asking how far is the convergence value from the initial
average. To study this bias in the estimation of the average, we
let $\beta(t)=|\xave(t)-\xave(0)|^2$, and we define a matrix $B$
such that
\begin{equation}\label{eq:B-def}
\lim_{t\to \infty}\Exp[\beta(t)]=x(0)^*B x(0).\end{equation}

Let $Q(t)=P(t-1) \ldots P(0)$, so that $x(t)=Q(t)x(0).$ The
convergence of the algorithm is equivalent to the existence of a
random variable $\rho$, taking values in $\R^\V$, such that
$\lim_{t\to \infty} Q(t)=\1 \rho^*$. This implies that
$$B=\Exp[\rho\rho^*]-2 N^{-1}\Exp[\rho]\1^*+N^{-2}\1\1^*,$$ where
$\Exp[\rho]$ and $\Exp[\rho\rho^*]$ are the eigenvectors relative
to 1 of $\bar P$ and $\L$, respectively. In particular, if $\bar
P$ is doubly stochastic, then
\begin{equation}\label{eq:B-simpleCase}
B=\Exp[\rho\rho^*]-N^{-2}\1\1^*,\end{equation}

For any $N$-dimensional matrix $\Delta$,
$\Exp[\rho\rho^*]=\frac{1}{\1^*\Delta\1}\lim_{t\to \infty} \L^t(\Delta):$
as a consequence, in the latter case the matrix $B$ can be computed as
\begin{equation}\label{eq:B-compute}
B=N^{-2}\lim_{t\to\infty}{\L^t(\1\1^*)}-N^{-2}\1\1^*.\end{equation}
Instead of computing $B$, it may be easier and significant to
obtain results about some functional of $B$, for instance the
spectral norm $\|B\|_2$ or the trace $\trace(B)$. The latter
figure is of interest because,
%by Cauchy-Schwarz inequality, $$x(0)^* B x(0)\le \|x(0)\|^2_2 \|B\|_2,$$ and,
if we assume that the initial values $x_i(0)$ are i.i.d. random variables with zero mean and variance $\sigma^2$, then
\begin{equation}\label{eq:trB-meaning}
\Exp[x(0)^*B x(0)]=\sigma^2 \trace{(B)}.\end{equation}

Motivated by our interest for the properties of the algorithms on large networks, and by Equation~\eqref{eq:trB-meaning}, we state the following definition.
\begin{definition}
Given a sequence of graphs $\G_N$, a randomized algorithm $P(t)$ is said to be {\em asymptotically unbiased} if $\displaystyle \lim_{N\to\infty}\trace(B).$
\end{definition}

\subsection{Abelian Cayley graphs}\label{sec:Cayley-intro}
A special family of graphs is that of Abelian Cayley graphs, which
are graphs representing a group, as follows. Let $G$ be an Abelian
group, considered with the additive notation, and let $S$ be a
subset of $G$. Then, the {\em Abelian Cayley graph} generated by
$S$ in $G$ is the graph $\G(G,S)$ having $G$ as node set and
$\E=\setdef{(g,h)\in G\times G}{h-g\in S}$ as edge set.
Note that the graph $\G(G,S)$ is symmetric if and only if $S$ is
inverse-closed, and is connected if and only if $S$ generates the
group $G$.
As well, a notion of Abelian Cayley {\em matrix} can be defined. Given a
group $G$ and a generating vector $\pi$ of length $\card{G}$, we
shall define the Cayley matrix generated by $\pi$ as
$\cayley(\pi)_{hg}=\pi_{h-g}.$ Correspondingly, for a given Cayley
matrix $M$, we shall denote by $\pi^{M}$ the generating vector of
the Cayley matrix $M$. Clearly, the adjacency matrices of $G$-Cayley graphs are $G$-Cayley matrices.
Abelian Cayley graphs and matrices enjoy important properties: we refer the reader to~\cite{LB:79,AT:99} for more details.

%\begin{proposition}\label{prop:Cayley-properties}
%The following statements hold true.
%\begin{enumerate}
%\item $\G(G,S)$ is a symmetric graph if and only if $S$ is
%inverse-closed, and is connected if and only if $S$ generates the
%group $G$.
%%
%\item
%%
%\item Stochastic Cayley matrices are doubly stochastic.
%
%\item Cayley graphs are vertex-transitive and thus regular.
%
%\item\label{item:Cayley-operations} If $M$ and $M'$ are $G$-Cayley matrices, then $\pi^{M+M'}=\pi^{M}+\pi^{M'}$, and $M$ and $M'$ commute. Namely, $\pi^{M M'}=\pi^{M}\conv\pi^{M'},$ where $\conv$ denotes convolution between vectors: $(v\conv v')_i=\sum_{j}v_j v'_{i-j}.$ Moreover, $\pi^{M}\conv\pi^{M'}=M\pi^{M'.}$
%
%\item\label{item:aut08} Let $\lambda_1$ be the spectral gap of the Laplacian of the Cayley graph $\G(G,S)$.  Then $\lambda_1 \le C \card{G}^{-2/\card{S}}$, where $C > 0$ is a constant independent of $G$ and $S.$
%%\item $\ldots$
%\end{enumerate}
%\end{proposition}

\begin{example}
Abelian Cayley graphs encompass several important examples.
\begin{enumerate}
\item\label{ex:Complete} The {\em complete} graph on $N$ nodes,
that is the graph where each node is directly connected with every
other node, is $\G(\Z_N,\Z_N\setminus \{0\})$;
\item\label{ex:Ring} The {\em circulant} graphs (resp. matrices)
are Abelian Cayley graphs (resp. matrices) on the group $\Z_N$; we
shall denote the circulant matrix generated by $\pi$ as
$\circulant(\pi)$. For instance, he {\em ring} graph is the
circulant graph $\G(\Z_N,\{-1,1\})$; its adjacency matrix is
$A=\circulant([0,1,0,\ldots,0,1])$ and its Laplacian is
$L=\circulant([2,-1,0,\ldots,0,-1])$. For a ring, the eigenvalues
of $L$ are
$\{2(1-\cos{\left(\frac{2\pi}{N}l\right)})\}_{l\in\Z_N}$, and in
particular
$\lambda_1=\frac{4\pi^2}{N^2}+o\left(\frac1{N^3}\right)$ as
$N\to+\infty.$ \item The square {\em grids} on a $d$-dimensional
torus are $\G(\Z_n^d,\{e_i,-e_i\}_{i\in\until{d}}),$ where $e_i$
are elements of the canonical basis of $\real^d$. In particular,
the {\em $n$-dimensional hypercube} graph is
$\G(\Z_2^n,\{e_i\}_{i\in \until{n}}).$
\end{enumerate}
\end{example}

Notice how all examples above are naturally forming a sequence of
graphs indicized by the number of nodes $N$. Special cases for
which we will be able to prove asymptotically unbiasedness in the
following, is when the generating set $S$ is finite and ``kept fixed'' as in the ring graph. Precisely we consider the following
general example

\begin{example}\label{exCayley} Start from an infinite lattice $\V={\Z}^d$ and fix a finite $S\subseteq \Z^d\setminus\{0\}$
generating $\Z^d$ as a group. For every integer $n$, let $V_n=[-n,
n]^d$ considered as the Abelian group $\Z_{2n+1}^d$ and let $\mc
G^{(n)}$ be the Cayley Abelian graph generated by $S_n=S\cap [-n,
n]^d$. Notice that all graphs $\mc G^{(n)}$ have the same
generating set $S$ for $n$ sufficiently large, in particular they
have the same degree. Moreover, by the assumption made on $S$, all
of them are strongly connected. Rings and grids fit in this
framework.
\end{example}

\bigskip
%The following result has been presented in~\cite{FF-SZ:08a}.
%\begin{proposition}\label{prop:DeltaCayley}
%If $C$ is a $G$-Cayley matrix, then also $\L(C)$ is $G$-Cayley. Moreover, if $C$ is symmetric, so is $\L(C).$
%\end{proposition}
%
A $G$-Cayley structure for the communication graph $\G$ has deep
consequences on the mean square analysis of randomized consensus
algorithms. In
particular, it is easy to see that if $C$ is a $G$-Cayley matrix,
then also $\L(C)$ is $G$-Cayley.
To exploit this property, let us define a sequence of matrices by
the following recursion. Let $\Delta(0)=N^{-1}\1\1^*$ and
$\Delta(t+1)=\L(\Delta(t))$. Since $\1\1^*$ is a Cayley matrix on
any Abelian group, the above fact implies that for every
$t\in\integernonnegative$, the matrices $\Delta(t)$ are
$G$-Cayley. Thus, the sequence $\Delta(t)$ can be equivalently
seen as the sequence of the corresponding generating vectors
$\pi(t)=e_0^*\Delta(t)$. We shall refer to this vector as the {\em
MSA vector}. Since $\L$ is linear, the MSA vector evolution can be
written as a matrix multiplication $\pi(t+1)=M \pi(t)$. Clearly,
\begin{equation}\label{eq:R-compute-Cayley}
R=\esr(M).
\end{equation}
Moreover, $M$ is $\ast$-stochastic, and if we let $\pi'$ be the right invariant vector of $M$, that is
$$\begin{cases}\pi'=M\pi'\\ \1^*\pi'=\1^*, \end{cases}$$
then, $B$ can be computed using the fact that stochastic Cayley matrices are also doubly stochastic and Equation~\eqref{eq:B-compute}, obtaining
\begin{equation}\label{eq:B-compute-Cayley}
B=\frac{1}{N}\cayley{\left(\pi'-\frac{1}{N} \1\right)},\quad
\trace B=\pi'_0-\frac1N.
\end{equation}
The following simple result will be useful later on.
\begin{lemma}\label{lemma: Lgraph} Let $\G$ be $G$-Cayley, and suppose that $P(t)_{uu}\geq \delta>0$ almost surely.
Then,
$$\G_{A}\subseteq \G_{M^*}\subseteq \G_{A+A^*+A^*A}$$
\end{lemma}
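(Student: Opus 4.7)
The plan is to first extract an explicit formula for the entries of $M$ in terms of $P(t)$, and then read off its support pattern by exploiting the fact that $P(t)$ is adapted to $\G$ and a.s.\ nonnegative. Using the Cayley convention $C_{hg}=\pi^C_{h-g}$, the generator of a Cayley matrix is its first column; applying this to $\L(C)=\Exp[P(t)^*CP(t)]$ and substituting $\ell=i-j$ yields the formula
$$
M_{h\ell}\;=\;\sum_{j\in G}\Exp\!\bigl[P(t)_{j+\ell,\,h}\,P(t)_{j,\,0}\bigr].
$$
Every summand is nonnegative, so $M_{h\ell}>0$ if and only if there exists some $j\in G$ on which $P(t)_{j+\ell,h}$ and $P(t)_{j,0}$ are simultaneously strictly positive with positive probability.

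For the inclusion $\G_A\subseteq\G_{M^*}$, take $(v,w)\in\G_A$, i.e.\ $v\ne w$ with $w-v\in S$, and specialize the formula at $j=0$; the hypothesis $P(t)_{0,0}\ge\delta$ a.s.\ gives
$$
M_{vw}\;\ge\;\Exp\!\bigl[P(t)_{w,v}\,P(t)_{0,0}\bigr]\;\ge\;\delta\,\Exp[P(t)_{w,v}].
$$
For both the BGA and the CBGA every edge of $\G$ is used with positive probability (in the BGA it suffices that $v$ be sampled as broadcaster; in the CBGA it suffices that $v\in\act$, $w\notin\act$ and $\neigh^-_w\cap\act=\{v\}$), hence $\Exp[P(t)_{w,v}]>0$, $M_{vw}>0$, and $(v,w)\in\G_{M^*}$.

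For $\G_{M^*}\subseteq\G_{A+A^*+A^*A}$, suppose $h\ne g$ and $M_{hg}>0$; then there exist $j\in G$ and a realization of $P(t)$ on which $P(t)_{j+g,h}>0$ and $P(t)_{j,0}>0$ hold jointly. Because $P(t)$ is adapted to $\G$, every strictly positive off-diagonal entry $P(t)_{u,v}$ forces $u-v\in S$. We distinguish four subcases. If $j=0$ and $j+g=h$, then $h=g$, which is excluded. If $j=0$ and $j+g\ne h$, then $g-h\in S$, so $A_{gh}>0$. If $j\ne 0$ and $j+g=h$, then $j=h-g\in S$, so $A^*_{gh}=A_{hg}>0$. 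Finally, if $j\ne 0$ and $j+g\ne h$, then both $j\in S$ and $j+g-h\in S$; setting $u:=j+g$ gives $u-g=j\in S$ and $u-h=j+g-h\in S$, so that the term $A_{ug}A_{uh}=1$ appears in $(A^*A)_{gh}=\sum_u A_{ug}A_{uh}$. In every subcase $(A+A^*+A^*A)_{gh}>0$, i.e.\ $(h,g)\in\G_{A+A^*+A^*A}$.

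The core difficulty is combinatorial rather than technical: $M_{hg}$ couples \emph{two} entries of $P(t)$ simultaneously, and the key observation is that the ``genuinely two-step'' subcase ($j\ne 0$ and $j+g\ne h$) produces a common node $u=j+g$ that is an in-neighbour in $\G$ of both $g$ and $h$, which is exactly what $A^*A$ records. Once this is recognized, matching each of the remaining subcases with a summand of $A+A^*+A^*A$ and, on the other side, exhibiting the $j=0$ contribution for every $(v,w)\in\G_A$, are both routine.
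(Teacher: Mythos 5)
Your proof is correct and follows essentially the same route as the paper's: the same entrywise formula $M_{uv}=\sum_k\Exp[P(t)_{k+v,u}P(t)_{k0}]$, the same $k=0$ term combined with $P(t)_{00}\ge\delta$ for the first inclusion, and the same three-way case split ($j=0$, $j+g=h$, neither) identifying the $A$, $A^*$ and $A^*A$ contributions for the second. You are in fact slightly more explicit than the paper in checking that $\Exp[P(t)_{wv}]>0$ for every edge of $\G$ in the BGA and CBGA, a point the paper leaves implicit.
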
\begin{proof}
A straightforward computation shows that
$$M_{uv}=\sum_k\Exp[P(t)_{k+v,u}P(t)_{k0}]$$
Hence, $M_{uv}>0$ implies that there exists $k$ such that
$P(t)_{k+v,u}>0$ and $P(t)_{k0}$. If $k=0$, this yields
$A_{vu}>0$. If $k+v=u$, then $A_{u-v,0}>0$ or also $A_{uv}>0$.
Finally, if both cases above do not happen, then, $A_{k+v,u}>0$
and $A_{k+v,v}>0$ which yields $(A^*A)_{uv}>0$. The second inclusion
is thus proven. To prove the first one, notice that, by the
assumption made, $M_{uv}\geq \delta\, \Exp[P(t)_{vu}]$. This
completes the proof. \end{proof}

Notice that in the BGA and CBGA examples we can always apply Lemma
\ref{lemma: Lgraph} with $\delta =1-q$.

%Remarkably, we can also obtain a bound on the number of non-zero
%entries of the matrix $M$, depending on the size of $S$. Indeed,
%recall that
%$$\Delta_{ij}(t+1)=\Exp[\sum_{h,k\in
%G}{P_{hi}(t)\Delta_{hk}(t)P_{kj}(t)}].$$ Since the matrices $P(t)$
%are adapted to $\G$, then $\Delta_{ij}(t+1)$ may depend on
%$\Delta_{hk}(t)$ only if $(i,h)\in\E$ and $(k,j)\in\E$, that is
%only if $h=i+l$ and $j=k+m$ with $l,m\in S$. Then, we have that
%$(h-k)=(i-j)+(l+m)$, and this implies, taking $u=h-k$ and $v=i-j$,
%that $\pi_u$ may depend on $\pi_v$ only if $(u,w)$ and $(w,u)$ are
%in $\E$. We can argue that the matrix $M$ has to be adapted to
%$\G_{A+A^2}$. In particular, if we denote by $d$ the degree of the
%Cayley graph, $M$ has at most $1+d^2$ non-zero diagonals.

%\begin{remark}\label{rem:sizeT}
%These remarks will be useful in the sequel.
%\begin{enumerate}
%\item Since the matrices $P(t)$ are adapted to $\G$, the matrix $M$ has to be adapted to $\G_{A+A^2}$, and hence $M$ has at most $1+d^2$ non-zero diagonals.
%\item If a matrix has at most $1+k$ non-zero diagonals and $1+k$ non-zero columns, it has at most $(1+k)^2$ non-zero entries.
%\end{enumerate}
%\end{remark}

\subsection{Local perturbation of stochastic matrices}\label{sec:LocalPerturb}
In this section we recall a perturbation result presented
in~\cite{FF-JCD:10} and which will be used later to estimate the
trace of the matrix $B$ and to prove asymptotic unbiasedness for
sequences of Cayley graphs.

We assume we have fixed an infinite universe set ${\cal V}$, an
increasing sequence $V_n$ of finite cardinality subsets of ${\cal
V}$ such that $\cup_nV_n={\cal V}$ and a sequence of irreducible
stochastic matrices $P^{(n)}$ on the state spaces $V_n$ with the
following stabilizing property: for every $i\in\mc V$, there exist
$n(i)\in \N$ such that $i\in V_{n(i)}$ and \be
P^{(n)}_{ij}=P^{(n(i))}_{ij}\,,\quad\forall n\geq n(i)\,,\;\forall
j\in V_{n(i)}\ee This property allows us to define, in a natural
way, a limit stochastic matrix on ${\cal V}$. For every $i,j\in
\mc V$, we define
\be\label{infinityP}P^{(\infty)}_{ij}=\left\{\ba{ll}
P^{(n(i))}_{ij}\quad &{\rm if}\; j\in V_{n(i)}\\ 0\quad &{\rm
otherwise}\ea\right.\ee The sequence of stochastic matrices $P^{(n)}$ is
said to be {\em weakly democratic} if the corresponding invariant vectors $\pi^{(n)}$ are such that, for all $i\in\V$,
$\pi^{(n)}_i\to 0$ for $n\to +\infty$. Fix now a finite subset
$W\subseteq\cap_nV_n$  and another sequence of irreducible
stochastic matrices $\tilde P^{(n)}$ on $V_n$ such that
\be\ba{ll}\tilde
P^{(n)}_{ij}=P^{(n)}_{ij}\quad\forall i\in V_n\setminus W\,,\;\forall j\in V_n\\[8pt]
\tilde P^{(n)}_{ij}=\tilde P^{(1)}_{ij}\quad \forall\, i\in W\,,\;
\forall j\in V_1 \ea\ee In other terms, $\tilde P^{(n)}$ can be
seen as a perturbed version of $P^{(n)}$ with the perturbation
confined to the fixed subset $W$ and stable (it does not change as
$n$ increases). Also for this perturbed sequence we can define,
following (\ref{infinityP}), the asymptotic chain $\tilde
P^{(\infty)}$. The following result has been proven in~\cite{FF-JCD:10}.
\begin{theorem}\label{theoweakdemocracy}
Suppose that $\tilde P^{(\infty)}$ and $\tilde P^{(\infty)}$ are
both irreducible. Then, if $P^{(n)}$ is weakly democratic, also
$\tilde{P}^{(n)}$ is weakly democratic.
\end{theorem}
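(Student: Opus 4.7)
The plan is to argue by contradiction, lifting the finite-$n$ invariant vectors to measures on the infinite state space $\mathcal{V}$, and exploiting the fact that $P^{(\infty)}$ and $\tilde P^{(\infty)}$ differ only in the finitely many rows indexed by $W$.

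The first step is a characterization: for an irreducible stabilizing sequence $(P^{(n)})$ with limit $P^{(\infty)}$, weak democracy is equivalent to the statement that $P^{(\infty)}$ is not positive recurrent (i.e., admits no nontrivial summable invariant measure). One direction is immediate: a probability invariant measure for $P^{(\infty)}$ would force $\pi^{(n)}$ to have mass bounded away from zero at some state. For the converse, if $\pi^{(n)}_{i_0}$ stays bounded below along a subsequence, a diagonal extraction yields a pointwise limit $\mu \ge 0$ with $\mu_{i_0} > 0$ and $\sum_j \mu_j \le 1$. Passing to the limit in the balance equation $\pi^{(n)}_j = \sum_i \pi^{(n)}_i P^{(n)}_{ij}$ via Fatou gives $\mu P^{(\infty)} \le \mu$ entrywise, and equality of total masses (using that stabilization forces each row of $P^{(\infty)}$ to have finite support summing to $1$, so $\sum_j \sum_i \mu_i P^{(\infty)}_{ij} = \sum_i \mu_i$ by Tonelli) upgrades this to equality, showing that $\mu$ is a nontrivial summable invariant measure, so $P^{(\infty)}$ is positive recurrent.

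The second step is a stability lemma: for irreducible chains on a countable state space, positive recurrence is preserved under modification of a finite number of rows. I would prove this directly by a first-return argument: for a reference state $i_0 \notin W$, decompose the excursion from $i_0$ to itself into segments depending on whether the chain is inside or outside $W$; since the dynamics outside $W$ are shared by $P^{(\infty)}$ and $\tilde P^{(\infty)}$, the exterior segments have identical distributions, and the total excursion length under the two chains differs only by bounded multiplicative factors determined by the finite rows indexed by $W$. Combining the two steps: $P^{(n)}$ weakly democratic $\Rightarrow$ $P^{(\infty)}$ not positive recurrent $\Rightarrow$ $\tilde P^{(\infty)}$ not positive recurrent $\Rightarrow$ $\tilde P^{(n)}$ weakly democratic.

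The main technical obstacle I anticipate is the second step, the finite-perturbation invariance of positive recurrence. The hypothesis that $\tilde P^{(n)}_{ij} = \tilde P^{(1)}_{ij}$ for $i \in W$ and $j \in V_1$, which confines the perturbed chain's exits from $W$ to the fixed finite set $V_1$, is precisely what makes the excursion comparison quantitative: an arbitrary perturbation that allowed the chain to jump from $W$ to distant states could in principle alter recurrence properties. With this locality in hand, the excursion decomposition yields a uniform comparison of $\Exp_{i_0}^{P^{(\infty)}}[T_{i_0}^+]$ and $\Exp_{i_0}^{\tilde P^{(\infty)}}[T_{i_0}^+]$, which are then finite together, closing the argument.
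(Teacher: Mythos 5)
First, a point of reference: the paper does not actually prove Theorem~\ref{theoweakdemocracy}; it imports it from~\cite{FF-JCD:10}. So your proposal cannot be compared with an in-paper argument, and must be judged on its own terms. Judged that way, it has a genuine gap in Step 1. The implication your chain of reasoning needs first is ``$P^{(n)}$ weakly democratic $\Rightarrow P^{(\infty)}$ not positive recurrent,'' i.e.\ precisely the direction you dismiss as ``immediate,'' and that direction is false. Weak democracy is not a property of the limit chain alone: it also depends on the finitely many rows of $P^{(n)}$ that have not yet stabilized (the boundary rows $i$ with $n(i)>n$), and these can trap essentially all of the invariant mass near the boundary even when $P^{(\infty)}$ is positive recurrent. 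Concretely, take $\mc V=\integernonnegative$, $V_n=\{0,\dots,n\}$, and let $P^{(\infty)}$ be the birth--death chain with $P_{i,i+1}=1/3$, $P_{i,i-1}=2/3$ for $i\ge1$ and $P_{01}=1$; it is positive recurrent with invariant distribution $\propto 2^{-i}$. Define $P^{(n)}$ to agree with these rows for $i<n$ and set $P^{(n)}_{n,n-1}=4^{-n}$, $P^{(n)}_{nn}=1-4^{-n}$. Every $P^{(n)}$ is irreducible and the sequence stabilizes to $P^{(\infty)}$, yet detailed balance gives $\pi^{(n)}_n=2^{n+1}\pi^{(n)}_0$, so normalization forces $\pi^{(n)}_i\to0$ for every fixed $i$: the sequence is weakly democratic although the limit is positive recurrent. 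Your first link therefore breaks, and more fundamentally no argument that factors entirely through recurrence properties of $P^{(\infty)}$ and $\tilde P^{(\infty)}$ can prove the theorem, because the hypothesis carries boundary information invisible in the limit chains.

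The other ingredients are sounder but do not rescue the strategy. Your diagonal-extraction/Fatou argument correctly establishes the \emph{reverse} implication ($P^{(\infty)}$ not positive recurrent $\Rightarrow P^{(n)}$ weakly democratic), and your observation that the constraint $\tilde P^{(n)}_{ij}=\tilde P^{(1)}_{ij}$ for $i\in W$, $j\in V_1$ is what keeps a finite-row perturbation from altering recurrence class is genuinely the right thing to notice (an unconstrained modification of a single row, sending mass to far-away states, can indeed turn positive recurrence into null recurrence). But since weak democracy is a statement about the finite truncations, a correct proof has to compare $\pi^{(n)}$ and $\tilde\pi^{(n)}$ at each fixed $n$ --- for instance through a regenerative or occupation-measure representation of the two invariant vectors relative to the perturbed set $W$, exploiting that the two finite chains coincide row by row outside $W$ --- and only then let $n\to\infty$. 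I would encourage you to rebuild the argument along those lines rather than through the limit chains.
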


In the sequel we will use this result to prove the convergence to
$0$ of the trace in (\ref{eq:B-compute-Cayley}).

%%%%%%%%%%%%%%%%%%%%%%%%%%%%%%%%%%%%%%%%%%%%%%%%%%%%%%%%%%%%%%%%%%%%%%%%%%%%%%%%%%%%%%%%%%%%%%%%%%%%%%%%%%%%%
%%%%%%%%%%%%%%%%%%%%              No Collisions                             %%%%%%%%%%%%%%%%%%%%%%%%%%%%%%%%%
%%%%%%%%%%%%%%%%%%%%%%%%%%%%%%%%%%%%%%%%%%%%%%%%%%%%%%%%%%%%%%%%%%%%%%%%%%%%%%%%%%%%%%%%%%%%%%%%%%%%%%%%%%%%%
%\clearpage
\section{Broadcast without collisions}\label{sec:NoColl}
In this section we present a comprehensive analysis of the
Broadcasting Gossip Algorithm, in terms of both rate of
convergence and bias.
The following result characterizes the convergence properties of the algorithm, extending~\cite[Lemma~2 and~4]{TCA-MEY-ADS-AS:09} to directed networks.

\begin{proposition}[Convergence of BGA algorithm]\label{prop:RateNoColl}
Consider BGA algorithm. Let $\G$ be any connected graph, $L$ its Laplacian matrix, and $\lambda_1$ the smallest positive eigenvalue of $L$. Then
\begin{align}\label{eq:barPNoColl}
{\bar P}=I-q N^{-1}L
\end{align}
\begin{align}\label{eq:UNoColl}
\L(\Omega)=\Omega-q (1-q) N^{-1} (L+L^*)+q N^{-2} (L^* \1\1^*+\1\1^*L) -q^2 N^{-2} (D^+-A)(D^+-A^*).
\end{align}
In particular, BGA algorithm achieves probabilistic consensus.
\end{proposition}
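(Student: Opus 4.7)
The plan is to exploit the compact form $P(t) = I - qL(t)$ from~\eqref{eq:P-general}, where $L(t) = D^-(t) - A(t)$ is the in-Laplacian of the random star subgraph centered at the uniformly chosen broadcasting node $v$. Because $L(t)$ is itself a Laplacian, $L(t)\1 = 0$ almost surely, an identity that will be used repeatedly. For~\eqref{eq:barPNoColl}, I would decompose $L(t) = \sum_{u\in \mathcal{N}^+_v}(e_u e_u^* - e_u e_v^*)$ and take the uniform average over $v\in\V$: the diagonal sum yields $\Exp[D^-(t)] = N^{-1}D^-$ while the off-diagonal sum gives $\Exp[A(t)] = N^{-1}A$, so $\bar P = I - qN^{-1}L$.

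For~\eqref{eq:UNoColl}, expand
$$P(t)^*\Omega P(t) = \Omega - q\bigl(\Omega L(t) + L(t)^*\Omega\bigr) + q^2 L(t)^*\Omega L(t),$$
and take expectations term by term. The linear-in-$q$ contribution collapses, via $L\1 = 0$ and $\Omega = I - N^{-1}\1\1^*$, to $-qN^{-1}(L + L^*) + qN^{-2}(\1\1^* L + L^*\1\1^*)$. For the quadratic-in-$q$ contribution, the same identity $L(t)\1 = 0$ lets one split $L(t)^*\Omega L(t) = L(t)^* L(t) - N^{-1} L(t)^*\1\1^* L(t)$. The first summand follows from the rewrite $L(t) = \sum_{u \in \mathcal{N}^+_v} e_u(e_u-e_v)^*$, giving $L(t)^* L(t) = \sum_{u \in \mathcal{N}^+_v}(e_u - e_v)(e_u - e_v)^*$, whose uniform expectation is easily assembled from $D^{\pm}$, $A$, and $A^*$. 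The second summand hinges on the key identity $L(t)^*\1 = a_v - d^+_v e_v$, where $a_v$ denotes the $v$-th column of $A$ (the out-neighbor indicator of $v$); expanding $(a_v - d^+_v e_v)(a_v - d^+_v e_v)^*$, averaging over $v$, and using $A = \sum_v a_v e_v^*$ and $D^+ = \sum_v d^+_v e_v e_v^*$ to re-aggregate the sum, one arrives at $\Exp[L(t)^*\1\1^* L(t)] = N^{-1}(D^+ - A)(D^+ - A^*)$. Collecting the four pieces and rearranging yields~\eqref{eq:UNoColl}.

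Probabilistic consensus is then an immediate application of Proposition~\ref{prop:ConvCrit}. The diagonal entries $P(t)_{uu} = 1 - qL(t)_{uu}$ take values in $\{1, 1-q\}$ and are therefore strictly positive since $q\in(0,1)$. The off-diagonal entries satisfy $\bar P_{uv} = qN^{-1}A_{uv}$ for $u\neq v$, so $\mathcal{G}_{\bar P}$ coincides with $\G$, which is strongly connected by hypothesis; the criterion applies and consensus follows.

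The main obstacle is the $q^2$ coefficient: the trick is to split $L(t)^*\Omega L(t)$ into the ``diagonal'' piece $L(t)^* L(t)$ and the rank-one correction $L(t)^*\1\1^* L(t)$, and then to recognise that the latter, after averaging over the broadcasting node, reassembles into the clean matrix product $(D^+ - A)(D^+ - A^*)$. Once the compact parametrisation $L(t)^*\1 = a_v - d^+_v e_v$ is in hand, the remaining computation is bookkeeping.
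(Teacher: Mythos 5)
Your proposal is correct and follows essentially the same route as the paper: a direct term-by-term expectation of the expanded product, with the paper organizing the computation around $\Exp[P(t)^*P(t)]$ and $\Exp[P(t)^*\1\1^*P(t)]$ while you organize it by powers of $q$ via $P(t)=I-qL(t)$, and both reduce to the same two sums $\Exp[L(t)^*L(t)]=N^{-1}(D^++D^--A-A^*)$ and $\Exp[L(t)^*\1\1^*L(t)]=N^{-1}(D^+-A)(D^+-A^*)$. The only caveat, shared equally by the paper's own proof, is that the final rearrangement into the stated coefficient $-q(1-q)N^{-1}(L+L^*)$ silently identifies $D^++D^--A-A^*$ with $L+L^*$, which holds only when $D^+=D^-$ (e.g.\ for symmetric or Cayley graphs), not for an arbitrary connected directed graph.
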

\begin{proof}
First, we compute $\bar P$, as
$$
\Exp[P(t)]=\frac1N \sum_{v\in\V} \big(I+q \sum_{u\in\neigh^+_v} (e_ue_v^*-e_ue^*_u) \big)=I-q N^{-1}L.
$$
Then, by Proposition~\ref{prop:ConvCrit}, the algorithm achieves probabilistic consensus. To compute $\L(\Omega)=\Exp[P(t)^*\Omega P(t)]$, we notice that $e^*_ie_j=\delta_{ij}$, and we compute
\begin{align*}
\Exp[P(t)^*P(t)]=&\frac1N \sum_{v\in\V} \left(I+q \sum_{u\in\neigh^+_v} (e_ue_v^*-e_ue^*_u) \right)^* \left(I+q \sum_{w\in\neigh^+_v} (e_we_v^*-e_we^*_w) \right)\\ =& I - \frac1N q\, (L+L^*) + q^2 \frac1N \sum_{v\in\V} \sum_{u\in\neigh^+_v}\left( e_ve_v^* -e_ue_v^* -e_ve_u^*+e_ue_u^*\right)\\
=& I - \frac1N q\, (L+L^*) + q^2 \frac1N (L+L^*);
\end{align*}
and
\begin{align*}
\Exp[P(t)^*\1\1^*P(t)]=&\frac1N \sum_{v\in\V} \left(I+q \sum_{u\in\neigh^+_v} (e_ue_v^*-e_ue^*_u) \right)^* \1\1^*\left(I+q \sum_{w\in\neigh^+_v} (e_we_v^*-e_we^*_w) \right)\\
=& \11^*- \frac1N q\, (L^*\1\1^*+\1\1^*L) + q^2 \frac1N \sum_{v\in\V} \sum_{u,w\in\neigh^+_v} \left(e_ve_v^* -e_ve_w^* -e_ue_v^*+e_ue_w^*\right)\\
=& \11^*- \frac1N q\, (L^*\1\1^*+\1\1^*L)\\&\quad + q^2 \frac1N
\left( \sum_{v\in\V} (d^+_v)^2 e_ve_v^* -
\sum_{v\in\V} d^+_v \sum_{w\in\neigh^+_v} e_ve_w^*
- \sum_{v\in\V} d^+_v \sum_{u\in\neigh^+_v} e_ue_v^*+ \sum_{v\in\V} \sum_{u,w\in\neigh^+_v} e_ue_w^*\right)\\
=& \1\1^* - \frac1N q\, (L^*\1\1^*+\1\1^*L) + q^2 \frac1N ((D^+)^2-D^+A^*-AD^++AA^*).
\end{align*}
\end{proof}

The next corollary provides explicit bounds on the convergence rate, assuming the communication graph to be symmetric. %from~\cite[Lemma~4]{TCA-MEY-ADS-AS:09}.
\begin{corollary}\label{cor:RateNoColl-undirected}
Under the assumptions of Proposition~\ref{prop:RateNoColl}, if $\G$ is symmetric, then
\begin{align}\label{eq:UndirectedNoColl}
{\bar P}=I-q N^{-1}L \qquad
\L(\Omega)=\Omega-2 q (1-q) N^{-1}L-q^2 N^{-2} L^2.
\end{align}
In particular, the convergence rate can be estimated as
\begin{align}\label{eq:Rundirected}
1-\frac{2 q}{N}\lambda_1\le R \le 1-\frac{2 q (1-q)}{N}\lambda_1.
\end{align}
\end{corollary}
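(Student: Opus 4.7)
The plan is to specialize Proposition~\ref{prop:RateNoColl} to the symmetric case and then feed the resulting closed forms into the generic sandwich bound~\eqref{eq:bounds}. For a symmetric graph one has $A=A^*$, $D^+=D^-=:D$, and $L=L^*$, so the formulas~\eqref{eq:barPNoColl} and~\eqref{eq:UNoColl} simplify immediately: the first stays as $\bar P=I-qN^{-1}L$, while in the second we have $L+L^*=2L$, the cross term $L^*\1\1^*+\1\1^*L$ vanishes because the symmetric Laplacian satisfies $L\1=0$, and $(D^+-A)(D^+-A^*)=L^2$. This produces the announced expression for $\L(\Omega)$.

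Next I will apply $\esr(\bar P)^2\le R\le \sr(\L(\Omega))$. The lower bound is the easy half: $L$ is symmetric positive semidefinite with spectrum $0=\lambda_0<\lambda_1\le\dots\le\lambda_{N-1}$, so $1-qN^{-1}\lambda_1$ is an eigenvalue of $\bar P$ distinct from the Perron eigenvalue $1$; hence $\esr(\bar P)\ge|1-qN^{-1}\lambda_1|$ and the elementary inequality $(1-x)^2\ge 1-2x$ yields $R\ge 1-2qN^{-1}\lambda_1$.

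For the upper bound I will diagonalize $\L(\Omega)$ using the spectral decomposition of $L$. Since $L$ and $\Omega$ share eigenspaces (both annihilate $\1$, while $\Omega$ is the identity on $\1^\perp$), every eigenvector of $L$ with eigenvalue $\lambda_i$ for $i\ge 1$ is also an eigenvector of $\L(\Omega)$, with eigenvalue $f(\lambda_i):=1-2q(1-q)N^{-1}\lambda_i-q^2N^{-2}\lambda_i^2$; the vector $\1$ gives the eigenvalue $0$. Since $f$ is strictly decreasing on $[0,+\infty)$, $\max_{i\ge 1}f(\lambda_i)=f(\lambda_1)\le 1-2q(1-q)N^{-1}\lambda_1$. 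The delicate, and only non-mechanical, point is to rule out that some $f(\lambda_i)$ be so negative that $|f(\lambda_i)|$ dominates the spectral radius. This is handled by observing that $\L(\Omega)=\Exp[P(t)^*\Omega P(t)]$ is a convex combination of positive semidefinite matrices---for any $v$, $v^*P^*\Omega Pv=(Pv)^*\Omega(Pv)\ge 0$ since $\Omega$ is an orthogonal projection---hence all its eigenvalues are nonnegative. Therefore $\sr(\L(\Omega))=f(\lambda_1)\le 1-2q(1-q)N^{-1}\lambda_1$, which is the claimed upper bound.
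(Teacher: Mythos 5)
Your proof is correct and follows the same route as the paper: specialize Proposition~\ref{prop:RateNoColl} to the symmetric case and feed $\bar P$ and $\L(\Omega)$ into the sandwich bound~\eqref{eq:bounds}. The one place you go beyond the paper is the observation that $\L(\Omega)=\Exp[P(t)^*\Omega P(t)]$ is positive semidefinite, so that $\sr(\L(\Omega))$ is attained at $\lambda_1$ and not at some large eigenvalue of $L$ with $|f(\lambda_i)|>f(\lambda_1)$; the paper takes this for granted, and your argument closes that small gap cleanly.
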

\begin{proof}
Specializing Proposition~\ref{prop:RateNoColl}, and combining~\eqref{eq:UndirectedNoColl} with~\eqref{eq:bounds}, we get
\begin{align*}
&R\ge \left(1-\frac{q}{N}\lambda_1\right)^2\ge 1-\frac{2 q}{N}\lambda_1\\
&R\leq 1-\frac{2 q (1-q)}{N}\lambda_1-\frac{q^2}{N^2}\lambda_1^2\le 1-\frac{2 q (1-q)}{N}\lambda_1.
\end{align*}
\end{proof}

Once we have understood that the algorithm converges, and how
fast, it is worth to discuss to which value it converges. It is
clear that the average is not preserved through each iteration,
although the average can sometimes be preserved in expectation. Indeed, $\Exp\left[\xave(t+1)| x(t)\right]= N^{-1}\1^*\bar P x(t)$, so that if $\G$ is symmetric,
\begin{align*}
\Exp\left[\xave(t)\right]=\xave(0), \qquad \forall t>0.
\end{align*}
However, this weak preservation property does not imply that the expected estimation error
$\Exp[\beta(t)]$ be zero, neither in finite time nor as time goes to infinity.
In facts, the following example, derived from~\cite{FF-SZ:07ita},
shows that $\lim_t\Exp[\beta(t)]$ can be positive and,
furthermore, it can be bounded away from zero, uniformly in the
network size $N$.
\begin{example}[Complete graph]\label{ex:ComplNoColl}\normalfont
Let $\G$ be a complete graph, and consider the BGA algorithm.
%Then, Proposition~\ref{prop:RateNoColl} gives $(1-q)^2\le R\le (1-q)^2+2 q^2$.
In this case, the operator $\L$ can be computed explicitly, giving
\begin{align*}
&R=(1-q)^2\\
&B=\frac{q}{2-q}\frac{1}{N}(I-\frac{\1\1^*}{N}).
\end{align*}
Namely, $\trace{B}=\frac{q}{2-q}\left(1-\frac{1}{N}\right)$ and
hence the {\em BGA is not asymptotically unbiased on the complete
graph}. Note that by changing the parameter $q$, one can trade off
speed and estimation bias: this is numerically investigated in
Figure~\ref{fig:complete-nocoll-q-BR}. Similar trade-offs will be
considered throughout the paper.
\end{example}
\begin{figure}[htb]
\center
\includegraphics[width=7cm]{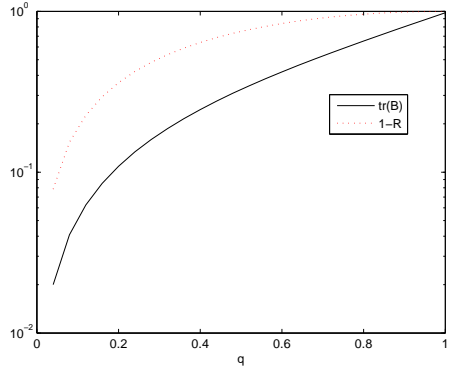}
\caption{Trade-off in the BGA between $\trace(B)$ and $R$ as functions of $q$, for a complete graph with $N=30$.}\label{fig:complete-nocoll-q-BR}
\end{figure}

\bigskip
Next, we focus on {\em Abelian Cayley graphs}.
\begin{lemma}\label{lemma:C+T-CayleyNoColl}
Consider the BGA algorithm and let the communication graph $\G$ be
Abelian Cayley with degree $d$. Then, the MSA vector $\pi$ evolves
as
\begin{align}\label{eq:EvolPiNoColl-Matrix}
\pi(t+1)=(C+T) \pi(t),
\end{align}
where the matrix $$C=I-2 \frac{q}{N} (L+L^*)$$ is Cayley and $T$
is a matrix such that $\displaystyle T=\frac{q^2}{N}\tilde T$ where $\tilde T$
does not depend neither on $q$ nor explicitly on $N$ but only on
$S$, and is such that the number of non-zero rows is
at most $d^2$ and non-zero columns is at most $d^2-d+1$.
\end{lemma}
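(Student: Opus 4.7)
The plan is to apply the general Cayley machinery of Subsection~\ref{sec:Cayley-intro} to the BGA iteration, leveraging the compact representation $P(t) = I - qL(t)$ from~\eqref{eq:P-general}. Expanding
\[\L(\Delta) = \Exp\bigl[(I - qL(t)^*)\Delta(I - qL(t))\bigr] = \Delta - q\,\Exp[L(t)^*]\Delta - q\Delta\,\Exp[L(t)] + q^2\,\Exp[L(t)^*\Delta L(t)],\]
and substituting $\Exp[L(t)] = L/N$, as already computed in the proof of Proposition~\ref{prop:RateNoColl}, the contribution of the terms linear in $q$ to the evolution of the generating vector $\pi(t)$ is governed by a Cayley operator: since in the Abelian setting Cayley matrix multiplication corresponds to convolution of generating vectors, the generators of $L^*\Delta$ and $\Delta L$ are $L^*\pi$ and $L\pi$ respectively, and this yields the Cayley matrix $C$.

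The harder ingredient is the quadratic term. When node $v$ broadcasts, $L(t) = \sum_{u\in \neigh^+_v}(e_u e_u^* - e_u e_v^*)$, so that
\[L(t)^*\,\Delta\, L(t) = \sum_{u,w \in \neigh^+_v}\Delta_{uw}\,(e_u - e_v)(e_w - e_v)^*.\]
Taking expectation over the uniform choice of $v \in \V$ and parametrizing $u = v+s$, $w = v+s'$ with $s,s' \in S$ (so that $\Delta_{uw} = \pi_{s-s'}$ by the Cayley assumption), the $(h,0)$ entry, which is the generator of the resulting Cayley matrix, becomes
\[\tilde\pi_h = \frac{1}{N}\sum_{s,s' \in S}\pi_{s-s'}\bigl(\delta_{h,s-s'} - \delta_{h,s} - \delta_{h,-s'} + \delta_{h,0}\bigr).\]
This is manifestly a linear operator in $\pi$, and can be written as $(1/N)\tilde T\pi$, giving $T = (q^2/N)\tilde T$.

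It then remains to read off the structural claims about $\tilde T$. First, the entries $\tilde T_{h,k}$ depend only on $S$, not on $q$ and not on the ambient group (hence not on $N$). Second, for column $k$ to be non-zero at least one pair $(s,s')\in S\times S$ with $s-s'=k$ must exist, i.e.\ $k\in S-S$; the trivial estimate $|S-S|\le d^2-d+1$ then gives the column bound. Third, for row $h$ to be non-zero one of the four Kronecker deltas must fire, which confines $h$ to $\{0\}\cup S\cup(-S)\cup(S-S)$; combining $|S|=|-S|=d$ with $0\in S-S$ and $|S-S|\le d^2-d+1$ yields the claimed row bound.

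The main obstacle I anticipate is verifying that the $q^2$ contribution is genuinely localized, i.e.\ that it does not hide a further translation-invariant piece which would need to be folded into $C$. The point is that, although taking expectation over the uniformly random broadcaster restores Cayley structure at the matrix level $\Delta\mapsto \Exp[L(t)^*\Delta L(t)]$, the induced map on generating vectors is \emph{not} a convolution: the dependence on $\pi$ goes through a finite sum indexed by $S\times S$ rather than through a translation-invariant kernel, and this is precisely what makes $\tilde T$ a bounded-support operator whose footprint is controlled purely by $S$, not by the size $N$ of the ambient group.
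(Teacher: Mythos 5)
Your proof is correct and follows essentially the same route as the paper's: expand $P(t)=I-qL(t)$, identify the linear-in-$q$ part as the Cayley matrix $C$ via $\Exp[L(t)]=L/N$ and commutativity of Abelian Cayley matrices, and compute the $q^2$ term explicitly by summing over the broadcaster and its neighbour pairs --- your Kronecker-delta formula for $\tilde\pi_h$ is exactly the paper's double sum after reindexing. The only quibble is that your row support $\{0\}\cup S\cup(-S)\cup(S-S)$ can have up to $d^2+d+1$ elements rather than the stated $d^2$, but the paper's own bookkeeping has the same looseness, and the precise constant is immaterial for Theorem~\ref{th:BGA-unbiased}, which only needs the perturbation confined to a fixed finite set independent of $N$.
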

\begin{proof}
Using the fact that all matrices are Abelian Cayley, we obtain
\begin{align*}
\Delta(t+1)=&\Exp[P(t)^*\Delta(t)P(t)]\\
=&\Delta(t)-q\Exp[L(t)^*\Delta(t)]-q\Exp[\Delta(t)L(t)]+q^2\Exp[L(t)^*\Delta(t)L(t)]
\\
=&\left(I-\frac{q}{N} (L+L^*)\right)\Delta(t)+q^2\frac1N
\sum_{g\in G} \sum_{h: h-g\in S}\sum_{k: k-g\in S}
(e_ge_h^*-e_he_h^*)\Delta(t) (e_ke_g^*-e_ke_k^*)
\end{align*}
%Now,
%\begin{align*}
%\Exp[L(t)^*\Delta(t)L(t)]=&\frac1N \sum_{g\in G}  \sum_{h: h-g\in
%S}\sum_{k: k-g\in S} (e_ge_h^*-e_he_h^*)\Delta(t) (e_ke_g^*-e_ke_k^*)\\
%=& \frac{1}N \sum_{g\in G} \left[I+q \sum_{h: h-g\in
%S}(e_he_g^*-e_he_h^*)
%\right]^* \Delta(t) \left[I+q \sum_{k: k-g\in S}(e_ke_g^*-e_ke_k^*)\right]\\
%=& \Delta(t) +q\frac1N\sum_{g\in G} \sum_{h: h-g\in S}(e_he_g^*-e_he_h^*)^*
%\Delta(t)+    q\frac1N\Delta(t)\sum_{g\in G}\sum_{k: k-g\in S}(e_ke_g^*-e_ke_k^*)
%\\ & \qquad+   \frac1N q^2 \sum_{g\in G}  \sum_{h: h-g\in S}(e_he_g^*-e_he_h^*)^*\Delta(t) \sum_{k: k-g\in S}(e_ke_g^*-e_ke_k^*)\\
%=& \Delta(t) -q\frac1N L  \Delta(t)-   q\frac1N\Delta(t) L^*   +   \frac1N q^2
%\sum_{g\in G}  \sum_{h: h-g\in S}(e_ge_h^*-e_he_h^*)\Delta(t) \sum_{k: k-g\in S}(e_ke_g^*-e_ke_k^*)\\
%=& \left(I - 2 q\frac1N L\right)\Delta(t) +   \frac1N q^2 \sum_{g\in G}  \sum_{h: h-g\in S}
%\sum_{k: k-g\in S} (e_ge_h^*-e_he_h^*)\Delta(t) (e_ke_g^*-e_ke_k^*)\\
%\end{align*}
%where the last equality comes from the symmetry of the graph.

Since $\pi(t)=\Delta(t)e^0$ (being $\Delta(t)$ symmetric for every
$t$), we easily obtain from above that
\begin{align*}\pi(t+1)=&(I-  q\frac1N (L+L^*) )\pi(t)+\frac1N q^2
\sum_{g\in G} \sum_{h: h-g\in S} \sum_{k: k-g\in S}\pi_{h-k}(t)
(e_ge_g^*-e_ge_k^*-e_he_g^*+e_he_k^*)e_0\\
=&(I-  q\frac1N (L+L^* )\pi(t)+\frac1N q^2[\sum_{h: h\in S}
\sum_{k: k\in S}\pi_{h-k}(t)(e_0-e_h) +\sum_{g\in -S} \sum_{h:
h-g\in S} \pi_{h}(t)(e_h-e_g)] \,.\end{align*} From this we
immediately see that the non-zero elements of $\tilde T$ have row
indices in $(S-S)\cup S$ and column indices in $S-S$. Hence the
result follows.

%In turn, this implies that for each component $g\in G,$
%\begin{align}\label{eq:EvolPiNoColl}
%\pi_g(t+1)=\big(1-\frac{2 q d}{N}+\frac{q^2}{N} \pi^{{A}^2}\big)
%\pi_g(t)+2 \big( \frac{q}{N}+ \frac{q^2}{N} \pi^{{A}} \big)
%(\pi^{A}\conv\pi(t))_{g}+ \frac{q^2}{N}<\pi^{A},\pi^{A}\conv
%\pi(t)> \delta_{0g},
%\end{align}
%where and $\delta_{0g}$ is the Kronecker delta function.
%At this stage, the statement follows by inspection.
%The formula~\ref{eq:EvolPiNoColl} can also be found in~\cite[Section~VI.D]{FF-SZ:07ita}.
% $$I-2 \frac{q}{N} L=\big (1-2 d \frac{q}{N} \big) I+2 \frac{q}{N} {A}.$$
\end{proof}
Note that, in general, $C$ will not be a stochastic matrix since
it may be negative on the diagonal, however for large enough $N$
(and $d$ fixed) it is surely a stochastic matrix. Note, moreover,
that the entries of the matrix $T$
in~\eqref{eq:EvolPiNoColl-Matrix} are proportional to $N^{-1}$
and, moreover, the number of the non-zero entries is upper bounded
by $(1+d^2)^2$, which does not depend on $N$. Hence, we expect
that, as $N$ diverges, $T$ would become negligible, and the MSA
would depend on the matrix $C$ only. This would imply the
unbiasedness of the algorithm, because it is immediate to remark
that the invariant vector of $C$ is $N^{-1}\1.$ This fact can be
actually stated as the following result.
\begin{theorem}[Unbiasedness of BGA]\label{th:BGA-unbiased}
Fix a finite $S\subseteq \Z^d\setminus\{0\}$ generating $\Z^d$ as
a group. For every integer $n$, let $V_n=[-n, n]^d$ considered as
the Abelian group $\Z_{2n+1}^d$ and let $\mc G^{(n)}$ be the
Cayley Abelian graph generated by $S_n=S\cap [-n, n]^d$. On the
sequence of $\mc G^{(n)}$ the BGA is asymptotically unbiased.
\end{theorem}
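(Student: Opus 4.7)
The conclusion $\trace B \to 0$ we seek follows, by Equation~\eqref{eq:B-compute-Cayley}, once we show that the right invariant vector $\pi'$ of $M = C + T$ satisfies $\pi'_0 \to 0$; equivalently, that the left invariant distribution of the row-stochastic matrix $M^*$ vanishes pointwise as $n \to \infty$. The plan is to cast this as an instance of Theorem~\ref{theoweakdemocracy} on the universe $\V = \Z^d$ with exhausting sets $V_n = [-n,n]^d$.

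The raw matrices $M^*$ fail the stabilization hypothesis~\eqref{infinityP} because their off-diagonal entries scale as $1/N$. I would remedy this by a uniform time change: pick a constant $c>0$ large enough that both
$$P^{(n)} := I + \tfrac{N}{c}(C^* - I), \qquad \tilde P^{(n)} := I + \tfrac{N}{c}(M^* - I)$$
are nonnegative (such $c$ exists because $M,C\ge 0$, so off-diagonals pose no problem, while $1-M^*_{ii}$ and $1-C^*_{ii}$ are $O(1/N)$ uniformly, with a constant depending only on $q$, $|S|$ and the entries of $\tilde T$). Both are then row-stochastic, and because the map $X\mapsto I+\alpha(X-I)$ preserves left invariants, $P^{(n)}$ inherits the invariant of $C^*$, namely $\tfrac{1}{N}\1$ (since $C$ is Cayley on the Abelian group $\Z_{2n+1}^d$), while $\tilde P^{(n)}$ inherits the invariant $\pi'$ of $M^*$. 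The factor $N/c$ cancels the $1/N$ in $C^*$, yielding $P^{(n)} = I - \tfrac{q}{c}(L+L^*)$; for $n$ large enough that $S_n = S$, its nonzero entries depend only on $S$ and agree with a fixed translation-invariant operator on $\Z^d$, so the stabilization property holds. The uniform invariant then makes $P^{(n)}$ weakly democratic.

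The key structural input from Lemma~\ref{lemma:C+T-CayleyNoColl} is that $T = \tfrac{q^2}{N}\tilde T$ with $\tilde T$ independent of $n$ (for $n$ large) and supported on a finite fixed set of row and column indices contained in $(S-S)\cup S$. Consequently
$$\tilde P^{(n)} - P^{(n)} = \tfrac{N}{c}\,T^* = \tfrac{q^2}{c}\,\tilde T^*,$$
a fixed matrix whose nonzero rows lie in a finite set $W\subseteq \Z^d$ independent of $n$. After re-indexing the sequence to start at $n$ large enough that $W\subseteq V_n$ and that $\tilde P^{(n)}|_W$ has stabilized, the pair $(P^{(n)},\tilde P^{(n)})$ satisfies the perturbation hypothesis of Theorem~\ref{theoweakdemocracy}. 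The limit chains $P^{(\infty)}$ and $\tilde P^{(\infty)}$ on $\Z^d$ are both irreducible: $P^{(\infty)}$ admits jumps in $S\cup(-S)$ which generate $\Z^d$ by assumption, and $\tilde P^{(\infty)}$ agrees with $P^{(\infty)}$ outside the finite set $W$. Applying Theorem~\ref{theoweakdemocracy} then gives weak democracy of $\tilde P^{(n)}$, i.e.\ $\pi'_g\to 0$ for every fixed $g\in\Z^d$; specializing to $g=0$ gives $\pi'_0 \to 0$ and hence $\trace B = \pi'_0 - 1/N \to 0$, as required.

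The most delicate step I foresee is verifying irreducibility of $\tilde P^{(\infty)}$: the perturbation $\tfrac{q^2}{c}\tilde T^*$ could in principle cancel some positive $S$-transitions of $P^{(n)}$ inside $W$. Resolving this requires inspecting the explicit expression for $\tilde T$ from the proof of Lemma~\ref{lemma:C+T-CayleyNoColl} and checking that enough transitions from $W$ to $\Z^d\setminus W$ retain positive probability; combined with the irreducibility of $P^{(\infty)}$ outside $W$, this will imply irreducibility of the perturbed limit and complete the argument.
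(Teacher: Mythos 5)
Your proposal follows the same route as the paper: apply Theorem~\ref{theoweakdemocracy} to the pair $C^*$ and $(C+T)^*$, use the Cayley structure of $C$ to get weak democracy of the unperturbed chain, and use the finite, $n$-independent support of $\tilde T$ from Lemma~\ref{lemma:C+T-CayleyNoColl} to see $(C+T)^*$ as a stable local perturbation confined to a fixed $W$. Your time-rescaling $X\mapsto I+\frac{N}{c}(X-I)$ is in fact a genuine refinement: the paper applies Theorem~\ref{theoweakdemocracy} directly to $C^*$ and $(C+T)^*$, whose off-diagonal entries scale as $1/N$ and therefore do not literally satisfy the stabilization property~\eqref{infinityP}; your normalization, which preserves invariant vectors and turns the $O(1/N)$ entries into eventually $n$-independent ones, is exactly what is needed to make that hypothesis hold verbatim.

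The one step you leave open --- irreducibility of $\tilde P^{(\infty)}$, i.e.\ the worry that the perturbation $\frac{q^2}{c}\tilde T^*$ might cancel some $S$-transitions inside $W$ --- does not require inspecting $\tilde T$ entrywise. Lemma~\ref{lemma: Lgraph}, applicable here with $\delta=1-q$, gives $M_{uv}\geq (1-q)\,\Exp[P(t)_{vu}]$ and hence $\G_{A}\subseteq\G_{M^*}$: every edge of the communication graph is retained in $M^*$ with a strictly positive entry, uniformly in $n$, and your rescaling only multiplies off-diagonal entries by the positive factor $N/c$, so it cannot destroy these transitions. Consequently the limit chain $\tilde P^{(\infty)}$ contains the infinite Cayley graph generated by $S$, which is connected because $S$ generates $\Z^d$; this is precisely how the paper closes the argument. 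With that substitution in place of your proposed case-by-case check, your proof is complete.
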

\begin{proof}
The idea is to apply the perturbation result Theorem~\ref{theoweakdemocracy} to the sequence of matrices $C^*$ and
$(C+T)^*$. Notice that ${\G}_{C^*}=\mc G^{(n)}$ while ${\G}_{(C+T)^*}\supseteq\mc G^{(n)}$ by Lemma~\ref{lemma: Lgraph}.
Hence, ${\cal G}_{C^*}$ and ${\G}_{(C+T)^*}$ are both strongly
connected. This also implies that the limit graph on $Z_n^d$ of
the two sequences ${\G}_{C^*}$ and ${\G}_{(C+T)^*}$ both
contain $\G^{(\infty)}$ which is simply the Abelian Cayley
graph on $Z_n^d$ generated by $S$ which is strongly connected by
the assumption made. Finally, notice that $C^*$ is Abelian Cayley,
hence obviously weakly democratic, while Lemma
\ref{lemma:C+T-CayleyNoColl} guarantees that $(C+T)^*$ is a finite
perturbation of $C^*$ in the sense of Section
\ref{sec:LocalPerturb}. Hence also $(C+T)^*$ is weakly democratic.
This yields, by~\eqref{eq:B-compute-Cayley},
$$\trace B=|N^{-1}-\pi'_0|\le N^{-1}+\pi'_0\to 0.$$
\end{proof}

\bigskip

%%%%%%%           Example: Ring               %%%%%%%%%%%%
The results about Cayley graphs can be made more specific in the following example.
\begin{example}[Ring graph]\label{ex:RingNoColl}\normalfont
Note that in this case the Cayley graph is circulant.
For the ring graph, Proposition~\ref{prop:RateNoColl} implies that, for $N$ large enough,
$$1-q \frac{8\pi^2}{N^3}\le R\le 1-q(1-q) \frac{8\pi^2}{N^3},$$
and namely ${R=1-\Theta(\frac{1}{N^3})}$.
Specializing the proof of Lemma~\ref{lemma:C+T-CayleyNoColl}, the evolution of $\pi(t)$ can be written as
\begin{align*}
\pi_j(t+1)=&\left(1-\frac{4q}{N}+\frac{q^2}{N}\pi^{A^2}_{j}\right)\pi_j(t)+2 \left(\frac{q}{N}-\frac{q^2}{N} \pi^A_{j}\right)(\pi_{j-1}(t)+\pi_{j+1}(t))+
\frac{q^2}{N}\left(2 \pi_0(t)+\pi_2(t)+\pi_{-2}(t)\right)\delta_{0j},
\end{align*}
that is $\pi(t+1)=(C+T)\pi(t)$, with
$$C=\circulant{(1-\frac{4q}{N}, 2\frac{q}{N}, 0, \ldots, 0, 2\frac{q}{N})}$$
and
$$T=\frac{q^2}{N}\left(
                      \begin{array}{cccccccc}
                        4 & 0 & 1 & 0 & \dots & 0 & 1 & 0 \\
                        -2 & 0 & -2 & 0 & \ldots &  & \ldots & 0 \\
                        0 & 0 & 1 & 0 & \ldots &  & \ldots & 0 \\
                        0 & \ldots &  &  &  &  & \ldots & 0\\
                        0 & \ldots &  &  &  &  & \ldots & 0\\
                        0 & \ldots &  &  &  &  & \ldots & 0\\
                        0 & \ldots &  &  & \ldots & 0 & 1 & 0 \\
                        -2 & 0 &  \ldots &  & \ldots & 0 & -2 & 0 \\
                      \end{array}
                    \right).$$
Thanks to these explicit formulas, we can numerically compute the
rate and bias. Namely, about the rate we obtain that
$\esr(C)<\esr(C+T)$, and $\esr(C+T)=1-\Theta(N^{-3})$. This means
that the perturbation $T$ does not significantly affect the rate
for large $N$. Moreover, since $\esr(C)=1-\Theta(N^{-3})$ and
$\esr(C+T)-\esr(C)=1-\Theta(N^{-4})$ we argue that actually
$$R=1-q\frac{8\pi^2}{N^3}+O\left(\frac1{N^{4}}\right) \qquad \text{as $t\to\infty$}.$$
%while both $\esr(C)=\Theta(N^{-3})$ and $\esr(C+T)=\Theta(N^{-3})$, and $\esr(C+T)-\esr(C)=\Theta(N^{-4}).$
On the other hand, about the bias we obtain that %$\|B\|_2=\Theta(N^{-2})$ and
$\trace{(B)}=\Theta(N^{-1})$.
We can also numerically evaluate $B$ and $R$ as functions of $q$:
these results are shown in Figure~\ref{fig:ring-nocoll-BR}. Note
that choosing $q$ we can trade off asymptotical error and
convergence rate. \hfill\QED
\end{example}
\begin{figure}[htb]
\center\includegraphics[width=7cm]{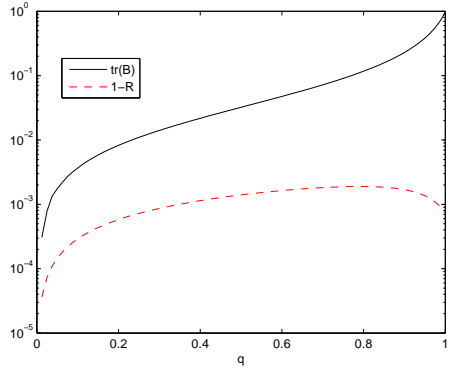}
\caption{Trade-off in the BGA between $\trace{(B)}$ and $R$ for a ring graph with $N=30$. }\label{fig:ring-nocoll-BR}
\end{figure}

\begin{remark}[Pareto optimality]
Combining Examples~\ref{ex:ComplNoColl} and~\ref{ex:RingNoColl},
we obtain that $\trace B$ is proportional to
$\frac{1-\sqrt{R}}{1+\sqrt{R}}\big(1-\frac1N\big)$ on a complete
graph, and to $N^3(1-R)$ on a ring graph, considering the
approximation for large $N$. In Figure~\ref{fig:pareto} we plot
$\trace B$ against $R$, for both graphs, up to a suitable scaling
with respect to $N$. The fact that both curves are monotone
decreasing highlights the fact that every value of $q$ is Pareto
optimal, in the sense that one can not improve one of the two
objectives without making the other worse.
\end{remark}
\begin{figure}[htb]
\center\includegraphics[width=7cm]{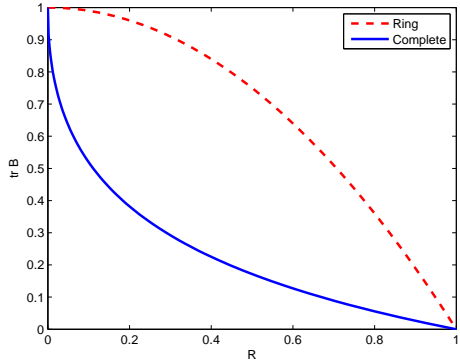}
\caption{$\trace B$ as a function of $R$ on complete and ring graphs.}\label{fig:pareto}
\end{figure}

\begin{example}[Random geometric graph]\label{ex:rggNoColl}
In order to take into account the locality constraint on
connectivity in real-world networks, several models of random
geometric graphs have been proposed, as accounted
in~\cite{MF-RM:07}. In this example we consider sequences of
random geometric graphs, based on the following construction. For
all $N\in\N$, we sample $N$ points $\{z_i\}_{i\in\Z_N}$ from a
uniform distribution over a unit square $[0,1]^2$, and we draw an
edge $(i,j)$ between nodes $i,j\in\Z_N$ when $\|z_i-z_j\|\le 0.8
\sqrt{\frac{\log{N}}{N}}.$ On these realizations we run the BGA
algorithm until convergence is reached, up to a small tolerance
threshold, and in this way we compute an approximation of
$\lim_{t\to\infty}\beta(t).$ The results are plotted in
Figure~\ref{fig:compare-bias-NoColl}, in comparison with the
analogous quantity on the complete and ring graph. It appears from
simulations that, as $N$ diverges, $\beta$ is $\Theta(1)$ on the
complete graph, whereas it is $\Theta(N^{-1})$ on the ring graph,
and $\Theta(N^{-1/2})$ on the random geometric graph. These
evidences are in accordance with the theoretical results, and
suggest their extension to other families of geometric graphs.
\end{example}
\begin{figure}[htb]
\center\includegraphics[width=9cm]{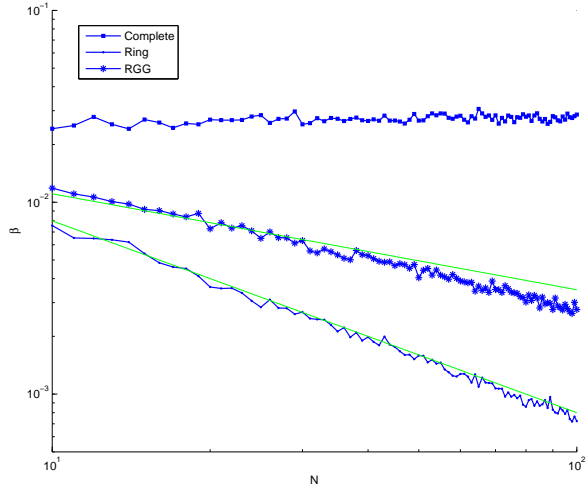}
\caption{The plot shows the asymptotical bias $\beta$ as a
function of $N$, computed by simulations on sequences of complete,
random geometric, and ring graphs. Plotted values are the
average over 1000 runs. See text for more
information.}\label{fig:compare-bias-NoColl}
\end{figure}

%%%%%%%%%%%%%%%%%%%%%%%%%%%%%%%%%%%%%%%%%%%%%%%%%%%%%%%%%%%%%%%%%%%%%%%%%%%%%%%%%%%%%%%%%%%%%%%%%%%%%%%%%%%%%
%%%%%%%%%%%%%%%%%%%%      Collisions                                        %%%%%%%%%%%%%%%%%%%%%%%%%%%%%%%%%
%%%%%%%%%%%%%%%%%%%%%%%%%%%%%%%%%%%%%%%%%%%%%%%%%%%%%%%%%%%%%%%%%%%%%%%%%%%%%%%%%%%%%%%%%%%%%%%%%%%%%%%%%%%%%
%\clearpage
\section{Broadcast with collisions}\label{sec:Coll}
In this section we present a comprehensive analysis of the
Collision Broadcast Gossip Algorithm, in terms of both rate of
convergence and bias. After proving a general convergence result,
we focus on complete graphs in Subsection~\ref{sec:complete-coll},
on Abelian Cayley graphs in Subsection~\ref{sec:Cayley-Coll}, and
finally on ring graphs in Subsection~\ref{sec:ring-coll}. Our main
finding is that the performance of the CBGA, in terms of both
speed and bias, is close to the BGA: in this sense we may claim
the robustness of broadcast gossip algorithms to local
interferences.

\begin{proposition}[Convergence of CBGA algorithm]\label{prop:Coll}
Consider the CBGA algorithm. Let $\G$ be any connected graph, and $L$ its Laplacian matrix. Then,
\begin{equation}\label{eq:barPgeneral}
\bar{P}=I-q p (1-p)^{D^-}L,
\end{equation}
where $((1-p)^{D^-})_{ij}=(1-p)^{D^-_{ij}}$ for every $i$ and $j$. In
particular, the CBGA algorithm achieves probabilistic consensus.
\end{proposition}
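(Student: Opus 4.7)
The plan is to exploit the graph-theoretic representation $P(t)=I-qL(t)$ from equation~\eqref{eq:P-general}, where $L(t)$ is the Laplacian of the random subgraph $\G(t)$ of successful transmissions. Linearity then gives $\bar{P}=I-q\,\Exp[L(t)]=I-q(\Exp[D^-(t)]-\Exp[A(t)])$, so the problem reduces to computing, for each $(v,u)\in\E$, the probability that the edge $(v,u)$ appears in $\G(t)$.

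The main step is to observe that $(v,u)\in\E(t)$ occurs exactly when $v\in\act$, $u\notin\act$ (by the half-duplex constraint), and every other in-neighbor of $u$ is inactive (no destructive collision). Since the activation indicators are independent Bernoulli$(p)$ variables over distinct nodes, these three events are jointly independent, so the probability factors as
\[
p\cdot(1-p)\cdot(1-p)^{d^-_u-1}=p\,(1-p)^{d^-_u}.
\]
The crucial feature is that this weight depends only on the receiver $u$, so it acts as a row scaling: $\Exp[A(t)]_{uv}=p(1-p)^{d^-_u}A_{uv}$ and $\Exp[D^-(t)]_{uu}=p(1-p)^{d^-_u}d^-_u$. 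Interpreting $(1-p)^{D^-}$ as the diagonal matrix whose $u$-th entry is $(1-p)^{d^-_u}$, left-multiplication by it produces exactly this row scaling, and we obtain $\Exp[L(t)]=p\,(1-p)^{D^-}L$, which yields~\eqref{eq:barPgeneral}.

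For the second claim I would invoke Proposition~\ref{prop:ConvCrit}. Since every coefficient $p(1-p)^{d^-_u}$ is strictly positive, the off-diagonal support of $\bar{P}$ coincides with that of $L$, and hence $\G_{\bar{P}}=\G$ is strongly connected by hypothesis. Moreover, for any realization, $P(t)_{vv}=1-qL(t)_{vv}$ takes only the values $1$ (when $v\notin\rec$, so $L(t)_{vv}=0$) or $1-q$ (when $v\in\rec$, so $L(t)_{vv}=1$), both of which are positive since $q\in(0,1)$. The hypotheses of Proposition~\ref{prop:ConvCrit} are therefore met, and probabilistic consensus follows. I do not expect any genuine obstacle: the only delicate points are the independence bookkeeping in the edge-probability calculation and the convention of reading $(1-p)^{D^-}$ as a diagonal operator acting by row scaling.
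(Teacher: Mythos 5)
Your proof is correct and follows essentially the same route as the paper's: both reduce the computation to the per-edge success probability $p(1-p)^{\deg^-_u}$, observe that it depends only on the receiver $u$ and hence acts as a row scaling of $L$, and then invoke Proposition~\ref{prop:ConvCrit}. You additionally spell out the independence argument behind that probability and verify the almost-sure positivity of the diagonal of $P(t)$, two points the paper's proof leaves implicit.
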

\begin{proof}
The probability of having at time $t$ a successful transmission
from $v$ to $u$ is $\Pr[A_{uv}(t)=1]=p(1-p)^{\deg^-_u} A_{uv}$.
Hence,
\begin{equation}\label{eq:success}
\Exp(A(t))=p(1-p)^{D^-}A\,.
\end{equation}
%where we have as usual denoted by $A$ the adjacency matrix of the communication graph $\G$.
Now,
\begin{align*}
\bar P=\Exp[P(t)]&= \Exp[I+q(A(t)-D(t))]=I+qp(1-p)^{D^-}[A-{D^-}].
\end{align*}
%Componentwise,
%\begin{align*}
%\bar P_{uv}&=\delta_{uv}+q\left[\Pr(A_{uv}(t)=1)-\delta_{uv}\sum_{k=1}^N \Pr(A_{uk}(t)=1)\right]=
%\delta_{uv}+q p(1-p)^{\deg_u} [A_{uv}-\deg_u \delta_{uv}].
%\end{align*}
%This yields (\ref{eq:barPgeneral}).
%To write this matrix-wise, let us define, given a real number $a$
%and a matrix $M$, as $a^M$ the matrix having the $i,j$-th entry
%equal to $a^{M_{ij}}$. Then,
%\begin{equation}\label{eq:barPgeneral}
%\bar{P}=I-q p (1-p)^{D}L.
%\end{equation}
Note that  $\G_{\bar P}$ is strongly connected: by
Proposition~\ref{prop:ConvCrit}, we can conclude the convergence
of the CBGA.
%\begin{proposition}
%The Collision Broadcast Gossip Algorithm achieves probabilistic consensus.
%\end{proposition}
\end{proof}

\medskip
\noindent{\bf Remark:} Formula~(\ref{eq:barPgeneral}) is simpler
if the graph is $d$-regular, because in that case
\begin{equation}\label{eq:barPregular}
    \bar{P}=I-q p (1-p)^d L.
\end{equation}
Denote by that $\lambda_1$ is the smallest nonzero eigenvalue of $L$. Then $\esr({\bar P})=1-qp(1-p)^d \lambda_1$,
and this together with~\eqref{eq:bounds} and~$[1-q p(1-p)^d
\lambda_1]^2\ge 1-2 q p(1-p)^d \lambda_1$ leads to
\begin{equation}\label{eq:lowerbound-R}
R \ge 1-2 q p(1-p)^d \lambda_1
\end{equation}
as a lower bound for the rate of convergence.
As a function of $p$, this lower bound is minimal whenever $\Pr[A_{ij}(t)=1]$ is maximal, that is for
$p$ equal to $p^*=\frac{1}{d+1}.$

Hence, natural questions are: is this bound tight? is $p^*$ the best choice to improve the convergence rate?
The content of the next section will answer positively these questions for complete and ring graphs.

\subsection{Complete graphs}\label{sec:complete-coll}
A thorough analysis can be carried on for the complete graph.
\begin{proposition}[Complete graph]\label{prop:CompleteColl}
Let $x(t)$ evolve following the CBGA algorithm, and let $R$ and $B$ be defined by~\eqref{eq:R} and~\eqref{eq:B-simpleCase}, respectively. Then,
\begin{align*}
&R=1-q(2-q)N p (1-p)^{N-1}\\
&B=\frac{q}{2-q}\frac{1}{N}(I-\frac{\1\1^*}{N}).
\end{align*}
Namely, $\trace(B)=\frac{q}{2-q}\left(1-\frac{1}{N}\right)$ and then the {\em CBGA is not asymptotically unbiased on the complete graph}.
\end{proposition}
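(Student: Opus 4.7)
The approach is to exploit the $S_N$-invariance of the CBGA dynamics on the complete graph. Since the law of $P(t)$ is invariant under arbitrary permutations of node labels, the operator $\L$ preserves the two-dimensional subspace $V = \mathrm{span}(I,\1\1^*)$ of $S_N$-symmetric matrices. Because both $\Omega = I - N^{-1}\1\1^*$ and $\1\1^*$ lie in $V$, both the rate $R$ and the limit $\lim_{t\to\infty}\L^t(\1\1^*)$ (which yields $B$ via~\eqref{eq:B-compute}, the required double-stochasticity of $\bar P$ being automatic by the symmetry of the graph) can be read off from a single $2\times 2$ matrix representing $\L|_V$.

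A key preliminary observation that makes the computation manageable is that on the complete graph the set $\rec$ has an extremely rigid structure: since $\neigh^-_u = \V\setminus\{u\}$ for every $u$, the requirement $|\neigh^-_u\cap\act|=1$ can be met only when $|\act|=1$, in which case $\rec = \V\setminus\act$ and the unique active node serves all $N-1$ receivers simultaneously. Hence $|\rec|\in\{0,N-1\}$ and, writing $r := Np(1-p)^{N-1} = N\Pr[|\act|=1]$, all relevant expectations collapse to elementary expressions in $r$. To identify $\L|_V$ in the basis $\{I,\1\1^*\}$ I would use the two linear functionals $\trace(\cdot)$ and $\1^*(\cdot)\1$: the second is preserved by $\L$ by row-stochasticity of $P(t)$, so only the quantities $\Exp[\trace(P(t)^*P(t))]$ and $\Exp[\trace(P(t)^*\1\1^*P(t))]$ need to be evaluated. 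Using $P(t)=I-qL(t)$ together with the identities $\|A(t)\|_F^2 = \|D^-(t)\|_F^2 = |\rec|$ and $\trace(D^-(t)A(t))=0$, both reduce to short counts in which, by the rigidity above, the only non-trivial moments are $\Pr[|\act|=1]$ and $\Pr[|\act|=1,v\in\act]$. This is the main calculational obstacle; everything else is linear algebra.

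Changing basis from $\{I,\1\1^*\}$ to $\{\Omega,\1\1^*\}$, the matrix of $\L|_V$ turns out to be upper triangular with diagonal $(1-q(2-q)r,\,1)$: a direct check gives $\L(\Omega) = (1-q(2-q)r)\Omega$ and $\L(\1\1^*) = q^2 Nr\,\Omega + \1\1^*$. Since $\Omega\in\Reach$, the first identity yields $R = 1-q(2-q)Np(1-p)^{N-1}$. For the bias, writing $\L^t(\1\1^*) = \gamma_t\Omega + \1\1^*$ gives the scalar recursion $\gamma_{t+1} = (1-q(2-q)r)\gamma_t + q^2 Nr$ with fixed point $\gamma_\infty = qN/(2-q)$; inserting $\lim_t\L^t(\1\1^*) = \tfrac{qN}{2-q}\Omega + \1\1^*$ into~\eqref{eq:B-compute} produces $B = \tfrac{q}{(2-q)N}\Omega = \tfrac{q}{2-q}\cdot\tfrac{1}{N}(I-\1\1^*/N)$, from which $\trace B = \tfrac{q}{2-q}(1-N^{-1})$ follows at once.
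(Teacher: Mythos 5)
Your proposal is correct and follows essentially the same route as the paper: both hinge on the observation that on the complete graph $\rec\neq\emptyset$ only when $|\act|=1$ (an event of probability $Np(1-p)^{N-1}$, in which case $P(t)=P_b$ for a uniformly chosen broadcaster), and both compute $\L$ restricted to the invariant subspace $\mathrm{span}(I,\1\1^*)$. Your change of basis to $\{\Omega,\1\1^*\}$, which makes the $2\times 2$ representation triangular with the identities $\L(\Omega)=(1-q(2-q)r)\Omega$ and $\L(\1\1^*)=q^2Nr\,\Omega+\1\1^*$, is only a mild streamlining of the paper's eigenvalue/eigenvector computation and yields the same $R$ and $B$.
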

\begin{proof}
It is immediate that in the complete graph either one node
communicates to every others, or no node communicates.  In the
latter case, $P(t)=I$. The former case, which has probability
$Np(1-p)^{N-1}$, leads to $P(t)=P_b$, where $P_b=I+q\sum_{u\neq
v}(e_ue_v^*-e_ue_u^*)$ and $v$ is the realization of a random
variable  uniformly distributed over the nodes. Consequently, the
analysis is quite close to the one of
Example~\ref{ex:ComplNoColl}. We note that
$$\Exp[P_b]=I+q\frac{\1\1^*-NI}{N}$$ and that
\begin{align*}
\Exp[P(t)^*P(t)]=&N p(1-p)^{N-1}\Exp[P_b^*P_b]+(1-N p(1-p)^{N-1})I\\
=& N p(1-p)^{N-1}[I+2q \frac{\1\1^*-N I}{N} +q^2\left(\frac{N-1}{N}I-\frac{2}{N}(\1\1^*-I)+\frac{N-1}{N}I\right)]\\
&+(1-N p(1-p)^{N-1})I\\
=& Np(1-p)^{N-1}[I-2q (1-q)(I-\frac{\1\1^*}{N})]+(1-N p(1-p)^{N-1})I\\
=& [1-2q (1-q)N p(1-p)^{N-1}]I+2q (1-q)N p(1-p)^{N-1}\frac{\1\1^*}{N}.
\end{align*}
and
\begin{align*}
\Exp[P(t)^*\1\1^*P(t)]=&
N p(1-p)^{N-1}\big[\1\1^*+\frac{q^2}{N}\big(
(N-1)^2 I-2 (N-1)(\1\1^*-I)\\&
+(N-2)(\1\1^*-I)+(N-1)I\big)\big]+(1-Np(1-p)^{N-1})\1\1^*\\
=& q^2 N^2p (1-p)^{N-1}I+(1-q^2 Np
(1-p)^{N-1})\1\1^*.
\end{align*}
This means that the application of $\L$ keeps invariant the subspaces generated by $I$ and $\1\1^*$, and the linear operator $\L$ can be represented by the matrix
\begin{equation*}
    \left(%
\begin{array}{cc}
  1-2q(1-q)Np (1-p)^{N-1} & q^2 Np (1-p)^{N-1} \\
  2q(1-q)Np (1-p)^{N-1} & 1-q^2 Np (1-p)^{N-1} \\
\end{array}%
\right).
\end{equation*}
The eigenvalues of this matrix are $1$ and
\begin{align*}
R&=1-q^2 Np (1-p)^{N-1}-2q(1-q)Np (1-p)^{N-1}\\
&= 1-q(2-q)N p (1-p)^{N-1},
\end{align*}
and the eigenspace relative to eigenvalue 1 is spanned by
\begin{equation*}q N p(1-p)^{N-1}
\left(
\begin{array}{c}
  q \\
  2(1-q) \\
\end{array}\right).
\end{equation*}
Since $E[\rho\rho^*]$ belongs to this eigenspace, and $\1^*E[\rho\rho^*]\1=1,$ we conclude that
\begin{align*}
B=E[\rho\rho^*]-N^{-2}\1\1^*=\frac{q}{2-q}\frac{1}{N}\big(I-\frac{\1\1^*}{N}\big).
\end{align*}
\end{proof}

%Note that from~\eqref{eq:success} we get $\Pr[A_{uv}(t)=1]=p (1-p)^{N-1} \quad
%\forall u\neq v,$ and then~\eqref{eq:barPregular} implies
%$$\bar{P}=I+q p (1-p)^{N-1} [\1\1^*-N I].$$
%Hence, Proposition~\ref{prop:CompleteColl} in particular implies that the bound on the convergence rate based on $\bar P$ is tight for the complete graph.

Some remarks are in order about the role of the parameters $p,q$ in the CBGA algorithm on the complete graph.
\begin{remark}[Optimization]
The convergence rate $R$ as a function of $p$ is optimal for $p^*=1/N.$ The optimal value
$R(p^*)=1-q(2-q)(1-\frac{1}{N})^{N-1}$ is increasing in $N$ and tends to $1-q(2-q)\frac{1}{e}$
when $N$ goes to infinity. Instead, if we fix $p=\bar{p}$, $R(\bar{p})$ tends to 1 as $N\to \infty$. %This is coherent with an analogous optimization based on $\bar{P}.$
On the other hand, $B$ is the same as for the BGA in Example~\ref{ex:ComplNoColl} and is independent of $p$.
%Namely, this implies $\trace B=\frac{q}{2-q}\left(1-\frac{1}{N}\right),$ $\|B\|_2=\frac{q}{2-q}\frac{1}{N},$ and thus $\Exp[\beta]\le \frac{2 q}{2-q} \|x(0)\|_\infty^2$.

From the design point of view, it is clear that $p$ has to be
chosen equal to $N^{-1}$, optimizing the speed. Instead, choosing
$q$ we trade off speed and asymptotic displacement: this trade-off
is numerically investigated in
Figure~\ref{fig:complete-coll-q-BR}.
\end{remark}

\begin{figure}[ht]
\center
\includegraphics[width=7cm]{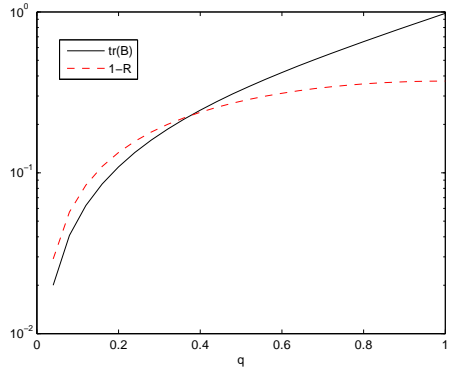}
      \caption{Trade-off in the CBGA between $\trace(B)$ and $R$ as functions of $q$, for a complete graph with $N=30$ and $p=1/N$.}\label{fig:complete-coll-q-BR}
\end{figure}

%%%%%%%%%%%%%%%%%%%%%%%%%%%%%%%%%%%%%%%%%%%%%%%%%%%%%%%%%%%%%%%%%%%%%%%%%%%%%%%%%%%%%%%%%%%%%%%%%%%%%%%%%%%%%%%%%%
%%%%%%%%%%%%%%%%%%%%%%%%%%%%%%%%%%%%%%%%%%%%%%%%%%%%%%%%%%%%%%%%%%%%%%%%%%%%%%%%%%%%%%%%%%%%%%%%%%%%%%%%%%%%%%%%%%
%\clearpage
\subsection{Abelian Cayley graphs}\label{sec:Cayley-Coll}
In this subsection we consider the CBGA on Abelian Cayley graphs with bounded degree. The next result characterizes the mean square analysis of the algorithm.
\begin{lemma}\label{lemma:C+T-CayleyColl}
Consider the CBGA algorithm and let the communication graph $\G$
be Abelian Cayley with degree $\deg$. Then, the MSA vector $\pi$
evolves as
\begin{align}
\pi(t+1)=(C+T) \pi(t),
\end{align}
where the matrix $$C=(I-qp (1-p)^\deg L^*)(I-qp (1-p)^\deg L)
$$ is Cayley and $T$
is a matrix whose entries do not depend on $N$ and such that the
number of non-zero rows is at most $d^2(d+1)^2$ and non-zero
columns is at most $d^2$.
\end{lemma}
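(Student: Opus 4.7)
The plan is to mimic Lemma~\ref{lemma:C+T-CayleyNoColl}, adapting it to the bilinear structure of the CBGA. Starting from $\Delta(t+1)=\Exp[P(t)^*\Delta(t)P(t)]$ with $P(t)=I-qL(t)$ by~\eqref{eq:P-general}, I would first expand
\[
\Delta(t+1)=\Delta(t)-q\,\Exp[L(t)^*]\Delta(t)-q\,\Delta(t)\Exp[L(t)]+q^2\,\Exp[L(t)^*\Delta(t)L(t)].
\]
Since $\G$ is $d$-regular Cayley, specializing Proposition~\ref{prop:Coll} as in~\eqref{eq:barPregular} yields $\Exp[L(t)]=p(1-p)^d L$. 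The Abelian commutativity of Cayley matrices, which follows from the commutativity of convolution of their generating vectors, collapses the two linear terms into $qp(1-p)^d(L+L^*)\Delta(t)$. Adding and subtracting $q^2 p^2(1-p)^{2d}L^*L\,\Delta(t)$ and again invoking commutativity ($L^*L\,\Delta(t)=L^*\Delta(t)L$) completes the square:
\[
\Delta(t+1)=\underbrace{\bigl(I-qp(1-p)^d L^*\bigr)\bigl(I-qp(1-p)^d L\bigr)}_{=C}\Delta(t)\;+\;q^2\bigl(\Exp[L(t)^*\Delta(t)L(t)]-p^2(1-p)^{2d}L^*\Delta(t)L\bigr).
\]
Since $L,L^*$ are Cayley, $C$ is Cayley as well; translating left-multiplication by a Cayley matrix into convolution of generating vectors produces $\pi(t+1)=C\pi(t)+T\pi(t)$, where $T$ encodes the bracketed correction.

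To analyze the support of $T$, I would observe that the correction is exactly the covariance
\[
\Exp[L(t)^*\Delta(t)L(t)]-\Exp[L(t)^*]\Delta(t)\Exp[L(t)],
\]
so its $(k,0)$-entry is a sum, over pairs of receivers $(u,i)$, of $\operatorname{Cov}\bigl(L(t)^*_{ku},L(t)_{i0}\bigr)\,\pi(t)_{u-i}$. Since $L(t)_{uv}$ depends only on the activation variables at the sites $\{u\}\cup\neigh^-_u$, this covariance vanishes whenever the two dependency sets associated with $u$ and $i$ are disjoint. A direct case analysis (either $u=i$, one of the two is an in-neighbor of the other, or $u$ and $i$ share a common in-neighbor) shows that non-zero contributions force $u-i$ to lie in a bounded subset of the group depending only on $S$, of cardinality at most $d^2$. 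This proves the column bound, and, since all the relevant probabilities are computed locally from $p$, $q$ and $S$, the entries of these columns are independent of $N$.

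For the row bound, I would count the admissible indices $k$ at which the $(k,0)$-entry of the correction is non-zero. The Kronecker factors $(\delta_{ku}-\delta_{k\sigma(u)})(\delta_{0i}-\delta_{0\sigma(i)})$ arising in the expansion of $L(t)^*\Delta(t)L(t)$ force $i\in\{0\}\cup\neigh^+_0$ and $k\in\{u\}\cup\neigh^+_u$; combined with the column constraint $u-i$ lying in a set of size at most $d^2$, a Minkowski-sum count then yields the stated bound $d^2(d+1)^2$. The main technical obstacle is precisely this combinatorial bookkeeping: unlike in the BGA, where only a single broadcaster is sampled at each step, in CBGA the quadratic term couples two random receivers with possibly distinct senders, so one must work out each subcase of the overlap analysis separately to confirm the exact cardinality bounds and to verify that every covariance remaining after the subtraction of the independent part is indeed finite and $N$-independent.
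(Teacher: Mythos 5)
Your proof follows the paper's argument essentially verbatim: both expand $\Delta(t+1)$, use $\Exp[L(t)]=p(1-p)^{d}L$ together with Abelian Cayley commutativity to absorb the linear terms and the product-of-expectations part of the quadratic term into $C=(I-qp(1-p)^{d}L^*)(I-qp(1-p)^{d}L)$, and identify $T$ with the remaining covariance correction, whose support is bounded by the same locality-of-dependence argument (covariances vanish when the activation neighborhoods of the two receivers are disjoint, confining the relevant differences to finite sets built from $S$). The only quibble is a harmless orientation slip (the nonzero entries of a row $u$ of $L(t)$ have column indices in $\{u\}\cup\neigh^-_u$, not $\neigh^+_u$), which does not affect the cardinality counts.
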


\begin{proof}
%Since the graph $\G$ is a symmetric Abelian Cayley graph, the
%evolution of the matrix $\Delta(t)$ is given by From
%
Using commutativity of Abelian Cayley matrices and
(\ref{eq:success}), we obtain that
%\begin{align*}
%\Delta(t+1)=& \Exp[P(t)^* \Delta(t) P(t) ]\\
%=&\Delta(t)+q \left(\Exp[(A(t)-D(t))^* ] \Delta(t)+\Delta(t)\Exp[(A(t)-D(t)) ]\right)\\
%&+q^2\Exp[(A(t)-D(t))^* \Delta(t)(A(t)-D(t))].
%\end{align*}
%Then, using $\Exp[D(t)-A(t)]=L$ and
%Proposition~\ref{prop:Cayley-properties}\ref{item:Cayley-operations},
\be\label{deltacoll} \Delta(t+1)=(I- q p(1-p)^d (L+L^*))\Delta(t)+
q^2\Exp[L(t)^*\Delta(t)L(t)] \ee Passing to the generating vector,
\be\label{deltacoll2} \pi(t+1)=(I- q p(1-p)^d (L+L^*))\pi(t)+
q^2f(\pi(t) \ee where
$$f(\pi(t)):=\Exp[L(t)^*\Delta(t)L(t)]e^0=\underbrace{\Exp[L(t)]^*\Delta(t)\Exp[L(t)]e^0}_{=:f_1(\pi(t))}+
\underbrace{\Exp[L(t)^*\Delta(t)L(t)]e^0-\Exp[L(t)]^*\Delta(t)\Exp[L(t)]e^0}_{=:f_2(\pi(t))}
$$

Now,
\begin{equation}\label{F1}
f_1(\pi(t))=p^2(1-p)^{2d}L^*L\pi(t)
\end{equation}
and
\begin{equation}\label{F2}
\begin{array}{rcl}[f_2(\pi(t))]_l&=&\sum_{hk}\left[\Exp[L(t)_{hl}L(t)_{k0}\right]-\Exp[L(t)_{hl}]\Exp[L(t)_{k0}]\pi(t)_{h-k}\\[10pt]
&=&\sum_{tk}\left[\Exp[L(t)_{k+t,l}L(t){k0}\right]-\Exp[L(t)_{k+t,l}]\Exp[L(t){k0}]\pi(t)_{t}
\end{array}
\end{equation}
Notice now that, by the way the model has been defined, we have
that $A_{ij}(t)$ and $A_{hk}(t)$ are independent whenever
$N^-(i)\cap N^-(h)=\emptyset$ or equivalently $i-h\not\in S-S$. In
this case, also $L_{ij}(t)$ and $L_{hk}(t)$ are independent.
Therefore, the double summation in (\ref{F2}), can be restricted
to $k\in S\cup\{0\}$ and $t\in S-S$. Consequently, the values of
$l$ for which $[f_2(\pi(t))]_l\neq 0$ can be restricted to
$(S\cup\{0\})+S-S-(S\cup\{0\})$. Plugging (\ref{F1}) inside
(\ref{deltacoll2}) and using the information on the structure of
$f_2(\pi(t))$ obtained above, the result follows.

\end{proof}

Thanks to Lemma~\ref{lemma:C+T-CayleyColl}, we can argue a result analogous to Theorem~\ref{th:BGA-unbiased}.

\begin{theorem}[Unbiasedness of CBGA]\label{th:CBGA-unbiased}
Fix a finite $S\subseteq \Z^d\setminus\{0\}$ generating $\Z^d$ as
a group. For every integer $n$, let $V_n=[-n, n]^d$ considered as
the Abelian group $\Z_{2n+1}^d$ and let $\mc G^{(n)}$ be the
Cayley Abelian graph generated by $S_n=S\cap [-n, n]^d$. On the
sequence of $\mc G^{(n)}$ the CBGA is asymptotically unbiased.
\end{theorem}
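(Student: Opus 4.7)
The plan is to follow exactly the template of Theorem~\ref{th:BGA-unbiased}, replacing Lemma~\ref{lemma:C+T-CayleyNoColl} by its collision analogue Lemma~\ref{lemma:C+T-CayleyColl}: apply the perturbation result Theorem~\ref{theoweakdemocracy} to the sequences $C^*$ and $(C+T)^*$ produced by that lemma, and then extract the trace bound from~\eqref{eq:B-compute-Cayley}. The overall skeleton is identical to the no-collision case; only the input from the structural lemma changes.

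To set up Theorem~\ref{theoweakdemocracy}, I would first observe that $C=(I-qp(1-p)^d L^*)(I-qp(1-p)^d L)$ is an Abelian Cayley matrix. Each of its two factors is doubly stochastic: off-diagonal entries are non-negative multiples of entries of $A$ or $A^*$, row and column sums are $1$ because $L\1=0=\1^*L$ on a Cayley graph, and the diagonal $1-qp(1-p)^d d$ lies in $(0,1)$ for every $q,p\in(0,1)$ and every fixed $d$ (since $p(1-p)^d d\le d/(d+1)\cdot(d/(d+1))^d<1$). Hence $C$ is doubly stochastic with uniform invariant vector $N^{-1}\1$, so $C^*$ is trivially weakly democratic in the sense of Section~\ref{sec:LocalPerturb}. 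Next, Lemma~\ref{lemma:C+T-CayleyColl} ensures that the entries of $T$ do not depend on $N$ and that their support (in both rows and columns) sits inside a fixed finite subset $W\subseteq\cap_n V_n$ determined only by $S$, so $(C+T)^*$ is a stable local perturbation of $C^*$ of exactly the type covered by Theorem~\ref{theoweakdemocracy}.

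Irreducibility of both limit chains on $\Z^d$ follows verbatim as in the BGA case: $\G_{C^*}$ coincides with $\mc G^{(n)}$, and Lemma~\ref{lemma: Lgraph} applied with $\delta=1-q$ gives $\G_A\subseteq \G_{(C+T)^*}$; passing to the limit on $\Z^d$, both graphs contain the Abelian Cayley graph generated by $S$, which is strongly connected by hypothesis. Theorem~\ref{theoweakdemocracy} then delivers weak democracy of $(C+T)^*$, i.e.\ $\pi'_0\to 0$ as $n\to\infty$, and~\eqref{eq:B-compute-Cayley} yields $\trace B=|N^{-1}-\pi'_0|\le N^{-1}+\pi'_0\to 0$.

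The genuine work is already concentrated in Lemma~\ref{lemma:C+T-CayleyColl}, which is the main obstacle. There one must split the second-order moment $\Exp[L(t)^*\Delta(t)L(t)]$ into a mean-field piece $\Exp[L(t)]^*\Delta(t)\Exp[L(t)]=p^2(1-p)^{2d}L^*\Delta(t)L$, which is Cayley and is absorbed into $C$, plus a correlation term that is localized on a fixed finite set of indices thanks to the fact that $A_{ij}(t)$ and $A_{hk}(t)$ are independent whenever $\neigh^-(i)\cap\neigh^-(h)=\emptyset$. Once that decomposition is secured, the rest of the argument is a pure repetition of the BGA proof and introduces no new ideas.
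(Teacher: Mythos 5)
Your proposal follows the paper's own argument essentially verbatim: both apply Theorem~\ref{theoweakdemocracy} to the pair $C^*$, $(C+T)^*$ supplied by Lemma~\ref{lemma:C+T-CayleyColl}, check irreducibility of the limit chains via Lemma~\ref{lemma: Lgraph} and the fact that $S$ generates $\Z^d$, and conclude with~\eqref{eq:B-compute-Cayley}. The one harmless slip is the claim that $\G_{C^*}$ \emph{coincides} with $\mc G^{(n)}$: for the CBGA, $C=(I-qp(1-p)^{\deg}L^*)(I-qp(1-p)^{\deg}L)$ is a product of two factors, so $\G_{C^*}=\G_{A+A^*+A^*A}\supseteq\mc G^{(n)}$ (as the paper states), which in general strictly contains $\mc G^{(n)}$; since only strong connectivity of the limit graph is needed, this does not affect the argument.
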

\begin{proof}
The idea is to apply the perturbation result Theorem
\ref{theoweakdemocracy}to the sequence of matrices $C^*$ and
$(C+T)^*$. Notice that ${\cal G}_{C^*}=\mc G_{A+A^*+A^*A}\supseteq
\mc G^{(n)}$ while ${\cal G}_{(C+T)^*}\supseteq\mc G^{(n)}$ by
Lemma \ref{lemma: Lgraph}. Hence, ${\cal G}_{C^*}$ and ${\cal
G}_{(C+T)^*}$ are both strongly connected. As in the proof of
Theorem~\ref{th:BGA-unbiased} this also implies that the limit
graph on $Z_n^d$ of the two sequences ${\cal G}_{C^*}$ and ${\cal
G}_{(C+T)^*}$ are also strongly connected. Finally, notice that
$C^*$ is Cayley Abelian, hence obviously weakly democratic, while
Lemma \ref{lemma:C+T-CayleyNoColl} guarantees that $(C+T)^*$ is a
finite perturbation of $C^*$ in the sense of Section
\ref{sec:LocalPerturb}. Hence also $(C+T)^*$ is weakly democratic.
This yields
$$\trace B=|N^{-1}-\pi'_0|\le N^{-1}+\pi'_0\to 0.$$
\end{proof}

The form of $T$ depends on the graph, and is complicated to compute in general. To give an example, in the next subsection we shall compute $T$ for the ring graph.

%%%%%%%%%%%%%%%%%%%%%%%%%%%%%%%%%%%%%%%%%%%%%%%%%%%%%%%%%%%%%%%%%%%%%%%%%%%
%
%%%%%%%%%%%%%%%%%%%%%%%%%%%%%%%%%%%%%%%%%%%%%%%%%%%%%%%%%%%%%%%%%%%%%%%%%%%
\subsection{Ring graphs}\label{sec:ring-coll}
In this subsection, we specialize the results about about Cayley
graphs to the case of ring graphs. In particular, we obtain tight
bounds on its convergence rate. The following lemma can be proved
on the lines of Lemma~\ref{lemma:C+T-CayleyColl} by a direct
computation, which we omit.
\begin{lemma}\label{lemma:C+T-RingColl}
Let $\G$ be a ring graph on $N\ge 9$ nodes. Let $C$ be the doubly stochastic symmetric circulant matrix
$C=\circulant([1-2k_1-2k_2, k_1, k_2, 0, \ldots, 0, k_2, k_1 ])$ with
\begin{align*}
&k_1=2qp(1-p)^2(1-2qp(1-p)^2)\\
&k_2=q^2 p^2(1-p)^4,
\end{align*}
and let
%Let $T$ be the matrix in Figure~\ref{fig:matrixT}.
%\begin{figure*}[ht]
\begin{align*}
&T=q^2p(1-p)^2\cdot\\
&\left(%
\begin{array}{cccccccc}
  4-6 p(1-p)^2 & 4p(1-p)^2 & (1-p)^3 & 0 & \ldots  & (1-p)^3 & 4p(1-p)^2 \\
  4p(1-p)^2-2 & (1-6(1-p)^2)p & 2(2p-1)(1-p)^2  & 0 & \ldots & 0 & -p(1-p)^2 \\
  -p(1-p)^2 & 4p(1-p)^2-2p & (1-p)(1-6p(1-p)) & 0  & \ldots & 0 & 0 \\
  0 & (1-(1-p)^2)p & -2p(1-p)(2p-1) & 0 & \ldots & 0  & 0 \\
  0 & 0 & p^2(1-p) & 0 & \ldots & 0  & 0 \\
  0 & 0 & 0 & 0 & \ldots & 0  & 0 \\
  \ldots & \ldots & \ldots & \ldots & \ldots & \ldots  & \ldots \\
  0 & 0 & 0 & 0 & \ldots & p^2(1-p)  & 0 \\
  0 & 0 & 0 & 0 & \ldots & -2p(1-p)(2p-1) & (1-(1-p)^2)p  \\
  -p(1-p)^2 & 0 & 0 & 0  & \ldots &  (1-p)(1-6p(1-p)) & 4p(1-p)^2-2p \\
  4p(1-p)^2-2 & -p(1-p)^2 & 0 & 0  & \ldots & 2(1-p)^2 (2 p-1) &  (1-6(1-p)^2)p \\
\end{array}%
\right)
\end{align*}
%\caption{Perturbation matrix $T$ for the CBGA on a ring graph.}\label{fig:matrixT}
%\end{figure*}
%
Then, the MSA vector evolves as $ \pi(t+1)=(C+T)\pi(t).$
\end{lemma}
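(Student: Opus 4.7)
The plan is to specialize Lemma \ref{lemma:C+T-CayleyColl} to the ring graph, where the group is $G = \Z_N$, the symmetric generating set is $S = \{-1, +1\}$, and the degree is $d = 2$. I would verify the $C$ part first, and then compute the entries of $T$ directly.

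The $C$ part is purely algebraic. From Lemma \ref{lemma:C+T-CayleyColl}, $C = (I - qp(1-p)^2 L^*)(I - qp(1-p)^2 L)$, and since $L$ is symmetric and circulant on the ring, this reduces to $I - 2qp(1-p)^2 L + q^2 p^2 (1-p)^4 L^2$. Since $L = \circulant([2, -1, 0, \ldots, 0, -1])$, a short circular-convolution computation gives $L^2 = \circulant([6, -4, 1, 0, \ldots, 0, 1, -4])$. Collecting terms yields the circulant matrix with generating vector $[1 - 2k_1 - 2k_2,\, k_1,\, k_2,\, 0,\, \ldots,\, 0,\, k_2,\, k_1]$ and the stated values of $k_1, k_2$.

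For $T$, I would work from $T \pi(t) = q^2 f_2(\pi(t))$, where $f_2$ is the covariance contribution identified in the proof of Lemma \ref{lemma:C+T-CayleyColl}. On the ring, the paper's independence criterion ($L(t)_{ij}$ and $L(t)_{hk}$ are independent whenever $i - h \notin S - S$) specializes to: independence fails only if $|i - h| \le 2$ modulo $N$. Combined with the restriction that $L(t)_{k, 0}$ is nonzero only for $k \in \{-1, 0, 1\}$, this confines the summation index corresponding to $h - k$ to $S - S = \{-2, -1, 0, 1, 2\}$, pinpointing the $5$ nonzero columns of $T$. The row index $l$ must further satisfy $l - h \in S \cup \{0\}$, giving $l \in \{-4, -3, \ldots, 3, 4\}$; since $N \ge 9$, these are $9$ distinct residues modulo $N$, which both explains the hypothesis on $N$ and accounts for the $9$ nonzero rows of $T$ in the displayed form.

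The remaining, and main, work is case-by-case: for each admissible pair of index pairs $\bigl((h, l), (k, 0)\bigr)$ with $|h - k| \le 2$, one evaluates the joint probability that both Laplacian entries are nonzero at time $t$ by enumerating the activation/collision patterns of the (at most three distinct) in-neighbors involved, carefully distinguishing diagonal entries $L(t)_{uu} = D(t)_{uu} \in \{0, 1\}$ from off-diagonal entries $L(t)_{uv} = -A(t)_{uv}$. Each joint probability is a monomial in $p$ and $1 - p$, and subtracting the product of marginals and summing over the allowed $k$ yields the individual entries of $T$ appearing in the statement. The main obstacle is purely combinatorial bookkeeping: the case analysis is long and sign-sensitive, but strictly mechanical, and invokes no ideas beyond those already present in the proof of Lemma \ref{lemma:C+T-CayleyColl}.
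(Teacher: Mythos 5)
Your proposal is correct and follows exactly the route the paper intends: the paper's own ``proof'' of this lemma is the single remark that it ``can be proved on the lines of Lemma~\ref{lemma:C+T-CayleyColl} by a direct computation, which we omit,'' and you carry out that specialization, explicitly verifying the $C$ part via $L^2=\circulant([6,-4,1,0,\ldots,0,1,-4])$ and correctly locating the support of $T=q^2 f_2$ (the only slip is calling the dependence set $S-S$, which for $S=\{-1,1\}$ is $\{-2,0,2\}$; the set $\{-2,-1,0,1,2\}$ you actually use is $(S\cup\{0\})-(S\cup\{0\})$, forced by the half-duplex constraint, and it is the one consistent with the five nonzero columns of the displayed $T$ and with the $N\ge 9$ hypothesis). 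The remaining entry-by-entry evaluation of the covariances is indeed the mechanical bookkeeping the paper itself omits, so your write-up is, if anything, more complete than the paper's.
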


Numerical computations based on Lemma~\ref{lemma:C+T-RingColl} show that $\trace(B)=\Theta(N^{-1})$, that is the asymptotical error has the same
dependence on $N$ as for the BGA.
% $\|B\|=\Theta(N^{-2})$ and

% ho commentato i remark sulla struttura di T: non li richiamavo, e il lettore puo' farseli da soli
%\begin{remark}[Properties of $T$]
%Several remarks on the matrix $T$ are in order.
%\begin{enumerate}
%  \item $T$ is proportional to $q^2$ and depends on $p$ in a complicated way.
%  \item $T$ has 5 non zero elements on 9 rows and 5 columns, and it is penta-diagonal. Its rank is $5$ and its trace is $6(1-5 p (1-p)^2).$
%  \item $\|C+T\|_2>1$ but $\sr(C+T)=1$ and $\esr(C+T)<1.$
%  \item The columns of $T$ sum to zero, as a consequence of the *-stocasticity of $\L$.
%\item Computational results, see Figure~\ref{fig:1norm}, show that $\|\pi-\pi'\|_1$, $\|\pi-\pi'\|_2$ and $\|\pi-\pi'\|_\infty$ are $\Theta(N^{-1}).$
%\end{enumerate}
%\end{remark}
%\begin{figure}[ht]
%\center
%\includegraphics[width=8cm]{Figures/trB.eps}
%      \caption{Dependence of $\trace(B)$ on $p$, for different network sizes $N$. Ring graph.}\label{fig:trB}
%\end{figure}

%\begin{figure}[ht]
%\center
%\includegraphics[width=7cm]{Figures/1norm.eps}
%      \caption{$\pi$ and $\pi'$ are the invariant distributions of $C$ and $C+T$, respectively.}\label{fig:1norm}
%\end{figure}

\medskip
Next result is about the rate of convergence, and shows that the bound on the convergence rate based on $\bar P$ is tight for the ring graph.
\begin{proposition}[Ring graph - Rate]\label{prop:RateColl-ring}
Given a ring graph and CBGA algorithm, we have
\begin{align*}
\bar{P}&=I-q p (1-p)^2 L;\\
\L(\Omega)&=\Omega-2 q(1-q)p (1-p)^2 L-q^2 p(1-p)^2 N^{-1} L^2+q^2 p(1-p)^2 N^{-1} p \circulant(\tau),
\end{align*}
where
$$\tau=[2(p-2), 6-4p+p^2,-3(2-2p+p^2),2-4p+3 p^2, 0, \ldots, 0, 2-4p+3 p^2, -3(2-2p+p^2), 6-4p+p^2].$$
%& \tau_4=(1-p)p.]$
%$\tau=[2 \tau_0, \tau_1 \tau_2, \tau_3, \tau_4, 0, \ldots, 0, \tau_4, \tau_3, \tau_2, \tau_1]^*$ and
%\begin{align*}
%& \tau_0=p-2,\\
%& \tau_1=6-4p+p^2\\
%& \tau_2=-3(2-2p+p^2)\\
%& \tau_3=2-4p+3 p^2\\
%& \tau_4=(1-p)p.
%
%& \tau_0=p-2,\\
%& \tau_1=(2-p)^2+2\\
%&\tau_2=-3(1+(1-p)^2)\\
%&\tau_3= 2 p^2+(1-p)^2+1\\
%&\tau_4=1-p.
%\end{align*}
Namely,
\begin{align*}
1- q  p (1-p)^2 \frac{8\pi}{N^2} \le R &\le 1- q (1-q) p (1-p)^2 \frac{8\pi}{N^2} \qquad \text{ for $N$ large enough}.
\end{align*}
\end{proposition}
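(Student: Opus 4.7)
The first identity for $\bar P$ is immediate from the $d$-regular specialization~(\ref{eq:barPregular}) with $d=2$, since the ring graph is $2$-regular and symmetric. For the second identity, I would expand $P(t) = I - qL(t)$ and use the fact that $L(t)\1 = 0$ (row-sums of any Laplacian vanish, whether or not $L(t)$ is symmetric). Writing $c(t) := \1^\ast L(t)$ for the (random) vector of column-sums, this gives
\[
P(t)^\ast \Omega P(t) = \Omega - q\bigl(L(t)+L(t)^\ast\bigr) + q^2 L(t)^\ast L(t)
+ \tfrac{q}{N}\bigl(\1 c(t) + c(t)^\ast \1^\ast\bigr) - \tfrac{q^2}{N} c(t)^\ast c(t).
\]
Taking expectations, the linear-in-$L(t)$ terms reduce via Proposition~\ref{prop:Coll} to $\Exp[L(t)] = p(1-p)^2 L$ (so that $\Exp[c(t)]=0$, killing the middle summands), leaving
\[
\L(\Omega) = \Omega - 2 q\, p(1-p)^2 L + q^2 \Exp[L(t)^\ast L(t)] - \tfrac{q^2}{N}\Exp[c(t)^\ast c(t)].
\]

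The core computation is therefore the two second-moment matrices. For the ring, $L(t) = D(t) - A(t)$ has a very special structure: each row is either zero or contains exactly one $+1$ on the diagonal and one $-1$ at the position of the unique successful sender; each column likewise has entries in $\{-1,0,+1\}$ coming from at most two neighbors. I would compute $\Exp[L(t)^\ast L(t)]$ entrywise: the $(i,j)$-entry is $\sum_k \Exp[L(t)_{ki} L(t)_{kj}]$, and by the circulant symmetry of the ring this equals the generator of a circulant matrix supported on $|i-j|\le 2$. The diagonal entry reduces to $\Exp[\deg^-_i(t)]$, which is $p(1-p)^2$, and the off-diagonal entries involve the joint probabilities that two neighboring nodes simultaneously receive successfully. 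Independence of non-overlapping neighborhoods collapses everything to local probabilities, giving the first term $2p(1-p)^2 L$ (reconstituted from $(L^\ast L$ after discarding the cross-correlations between far edges) plus the correction circulant. An analogous bookkeeping gives $\Exp[c(t)^\ast c(t)]$, whose dominant contribution is $p(1-p)^2 L^2$ and whose residual covariance feeds into the $\circulant(\tau)$ term. Collecting all pieces and using $-2qp(1-p)^2 L + 2q^2 p(1-p)^2 L = -2q(1-q) p(1-p)^2 L$ reproduces the stated formula.

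For the rate bounds I apply~\eqref{eq:bounds}. For the lower bound, $R \ge \esr(\bar P)^2$, and from $\bar P = I - qp(1-p)^2 L$ the relevant eigenvalue is $1 - qp(1-p)^2 \lambda_1$ with $\lambda_1 = 4\pi^2/N^2 + O(N^{-3})$ by Example~\ref{exCayley}(ii); squaring gives $R \ge 1 - q p(1-p)^2 \cdot 8\pi^2/N^2 + O(N^{-4})$. For the upper bound, $R \le \sr(\L(\Omega))$, and since $\L(\Omega)$ is circulant its spectrum is given by the DFT of its generator. The eigenvalue closest to $1$ is dominated by the $-2q(1-q)p(1-p)^2 L$ term, whose smallest nonzero eigenvalue again asymptotically equals $8\pi^2/N^2$; the contributions from $L^2/N$ and $\circulant(\tau)/N$ are of lower order in $N$ (they carry an extra $1/N$), so the dominant behavior is $1 - 2q(1-q)p(1-p)^2 \lambda_1 = 1 - q(1-q)p(1-p)^2 \cdot 8\pi^2/N^2 + o(N^{-2})$.

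The main obstacle is the entrywise analysis of $\Exp[L(t)^\ast L(t)]$ and $\Exp[c(t)^\ast c(t)]$: the random Laplacian $L(t)$ has correlated entries whenever two neighborhoods overlap, so the computation must be broken into cases according to the distance between the two relevant nodes in the ring (distance $0$, $1$, $2$, or $\ge 3$), and for each case one enumerates the configurations of $\act$ and $\rec$ that yield a nonzero product. This bookkeeping is exactly what makes the generator of the correction circulant $\tau$ have its particular pattern; once done, assembling the final formula and verifying the rate asymptotics are routine.
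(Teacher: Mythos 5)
Your route is organized genuinely differently from the paper's. The paper does not touch $\Exp[P(t)^*\Omega P(t)]$ directly: it invokes Lemma~\ref{lemma:C+T-RingColl}, computes $\L(I)=\circulant\big((C+T)e_0\big)$ and $\L(\1\1^*)=\1\1^*+\circulant(T\1)$, assembles $\L(\Omega)$ from these two circulants, and then reads off the spectrum by the DFT of the generating vector. All the probabilistic bookkeeping is thus packaged once into the explicit $C$ and $T$ of Lemma~\ref{lemma:C+T-RingColl} (whose own derivation the paper omits). You instead expand $P(t)=I-qL(t)$ inside $\Omega$ and reduce everything to the two second moments $\Exp[L(t)^*L(t)]$ and $\Exp[c(t)^*c(t)]$. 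Your algebraic identity for $P(t)^*\Omega P(t)$ is correct, the cross terms do vanish because $\Exp[L(t)]=p(1-p)^2L$ is symmetric with zero column sums, and your treatment of the rate bounds (lower bound from $\esr(\bar P)^2$ via~\eqref{eq:bounds}, upper bound from the DFT of the circulant $\L(\Omega)$ with the $N^{-1}$ terms contributing only at lower order) is exactly the paper's. Your route is arguably more self-contained, since it does not presuppose Lemma~\ref{lemma:C+T-RingColl}; the price is that you must redo the combinatorial enumeration that the lemma encapsulates.

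That enumeration is precisely the part you defer, and the one entry you do compute is wrong. Since row $k$ of $L(t)$ carries a $+1$ at $k$ and a $-1$ at $\source(k)$ whenever $k\in\rec$ (and is zero otherwise), the diagonal entry is $\sum_k\Exp[L(t)_{ki}^2]=\Pr[i\in\rec]+\Exp[\#\{k:\source(k)=i\}]=2p(1-p)^2+2p(1-p)^2=4p(1-p)^2$; it is not $\Exp[\deg^-_i(t)]$, and $\Exp[\deg^-_i(t)]$ itself equals $2p(1-p)^2$, not $p(1-p)^2$. More importantly, because $\source(k)$ is always adjacent to $k$, the support of every row of $L(t)$ is an edge of the ring, which forces $\Exp[L(t)^*L(t)]=2p(1-p)^2L$ \emph{exactly}: contrary to your attribution, no part of the correction circulant comes from this term. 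The entire $N^{-1}$ correction comes from the rank-one piece $N^{-1}\Exp[c(t)^*c(t)]$, whose entries are covariances of column sums and are supported out to $|i-j|=4$ (the neighborhoods governing $c_i$ and $c_j$ overlap up to that distance), not $|i-j|\le 2$. None of this undermines the strategy or the final rate asymptotics, which depend only on the leading term $-2q(1-q)p(1-p)^2L$; but as written the proposal establishes the formula for $\bar P$ and the shape of the argument, while the stated expression for $\L(\Omega)$ --- the actual content of the display --- still rests on a case analysis you have not carried out and whose only sampled value is incorrect.
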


\begin{proof}
Computing $\bar P$ is easy. In order to compute $\L(\Omega)$, we first compute, using Lemma~\ref{lemma:C+T-RingColl},
\begin{align*}
\L(I)=& \circulant((C+T) (1, 0, \ldots,0)^*)\\
=& \circulant(1- 2 k_1-2 k_2+ q^2 p (1-p)^2 (4-6 p (1-p)^2), k_1+q^2 p (1-p)^2 (4 p (1-p)^2-2), \\
 &\qquad k_2- q^2 p (1-p)^2  p (1-p)^2, 0, \ldots,0, k_2- q^2 p (1-p)^2  p (1-p)^2,  \\
&\qquad k_1+q^2 p (1-p)^2 (4 p (1-p)^2-2))\\
=&\circulant(1- 4 q (1-q) p (1-p)^2 , 2 p(1-p)^2 q (1-q),%\\&\qquad
0, \ldots,0, 2 p(1-p)^2 q (1-q))\\
=&-2 q(1-q)p (1-p)^2 L,
\end{align*}
and
\begin{align*}
\L(\1\1^*)=&\circulant((C+T) \1 )=\1\1^*+\circulant(T\1).
\end{align*}
Then, by straightforward computations,
\begin{align*}
\L(\Omega)=&\Omega-2 q(1-q)p (1-p)^2 L- \circulant(T\1)\\
=&\Omega-2 q(1-q)p (1-p)^2 L-q^2 p(1-p)^2 N^{-1} L^2 +q^2 p^2(1-p)^2 N^{-1} \circulant(\tau),
\end{align*}
provided we define $\tau$ as in the statement. %$\tau=[2 \tau_0, \tau_1 \tau_2, \tau_3, \tau_4, 0, \ldots, 0, \tau_4, \tau_3, \tau_2, \tau_1]^*$
%with \begin{align*}
%& \tau_0=p-2,\\
%& \tau_1=6-4p+p^2\\
%& \tau_2=-3(2-2p+p^2)\\
%& \tau_3=2-4p+3 p^2\\
%& \tau_4=(1-p)p.
%\end{align*}

%Remark that $\sum_{i=0}^{4}\tau_i=0.$
%
Note that $\tau^*\1=0$ and that the eigenvalues of the circulant matrix $\circulant(\tau)$ are
$$\left\{\tau_0+ 2 \sum_{i=1}^{4} \tau_i \cos{\left( \frac{2 \pi}{N} i l \right)}\right\}_{l\in\Z_N},$$
and those of $L$ are
$\left\{2\left(1-\cos{\left( \frac{2 \pi}{N} l \right)}\right)\right\}_{l\in\Z_N}.$
As a consequence, the eigenvalues of
$\L(\Omega)$ are $\mu_0=0$, and
\begin{align*}
\mu_l=&1+ 2 p(1-p)^2 [-2 q(1-q) - 3q^2 N^{-1}+ \tau_0 q^2 p N^{-1} ]\\
&+2 p(1-p)^2 [ 2 q(1-q) +4 q^2 N^{-1} + \tau_1 q^2 p N^{-1}]\cos{\left( \frac{2 \pi}{N} l \right)}+2 q^2 p(1-p)^2 N^{-1}[\tau_2 p-1]\cos{\left( \frac{4 \pi}{N} l \right)}\\
&+ 2\tau_3 q^2 p^2(1-p)^2 N^{-1}\cos{\left( \frac{6 \pi}{N} l \right)}+ 2\tau_4 q^2 p^2(1-p)^2 N^{-1}\cos{\left( \frac{8 \pi}{N} l \right)},
\end{align*}
for $l=1,\ldots,N-1.$
Because of the $N^{-1}$ factors, we have that, as $N\to\infty$,
$$\mu_l=1-4 q(1-q) p(1-p)^2 \frac{2 \pi^2 l^2}{N^2}+ o\left(\frac{l^2}{N^2}\right).$$
Then, for $N$ large enough,
$$\esr \L(\Omega)=\max_{l\in \Z_N\setminus\{0\}} \mu_l=1-8 q(1-q) p(1-p)^2 \frac{\pi^2}{N^2}+ o\left(\frac{1}{N^2}\right).$$
Finally, the bound on the rate follows from~\eqref{eq:bounds}.
\end{proof}

\begin{remark}
The speed of convergence for the Collision Broadcasting Gossip is one order faster than the Broadcasting Gossip. This is not surprising, since in the former case the average number of activated nodes per round is $Np$, instead of $1$.
\end{remark}

\begin{remark}[Rate Optimization]
Remarkably, for $N$ large enough both the upper and the lower bound on the rate show the same dependence on $p$. Thus, they can be simultaneously optimized by taking $p^*=1/3.$
The behavior for large $N$ can also be investigated by numerical computations, showing that
$\esr(C)<\esr(C+T)$, and $\esr(C+T)=1-\Theta(N^{-2})$. This means that the perturbation $T$ does not significantly affect the rate for large $N$. Moreover, since $\esr(C)=1-\Theta(N^{-2})$ and $\esr(C+T)-\esr(C)=1-\Theta(N^{-3})$, we argue that actually
$$R=1-qp(1-p)^2\frac{8\pi^2}{N^2}+O\left(\frac1{N^{3}}\right).$$ This is reminiscent of a similar observation about the BGA algorithm in Remark~\ref{ex:RingNoColl}.

We can also numerically evaluate $B$ and $R$ as functions of $p$ and $q$: these results are shown in Figure~\ref{fig:ring-coll-BR}. Note that the asymptotical error does not depend significantly on $p$: this implies, from the design point of view, that $p$ can be chosen equal to $p^*=1/3$, optimizing the convergence rate. Instead, choosing $q$ we can trade off asymptotical error and convergence rate.
\end{remark}
\begin{figure}[ht]
\center
\includegraphics[width=7cm]{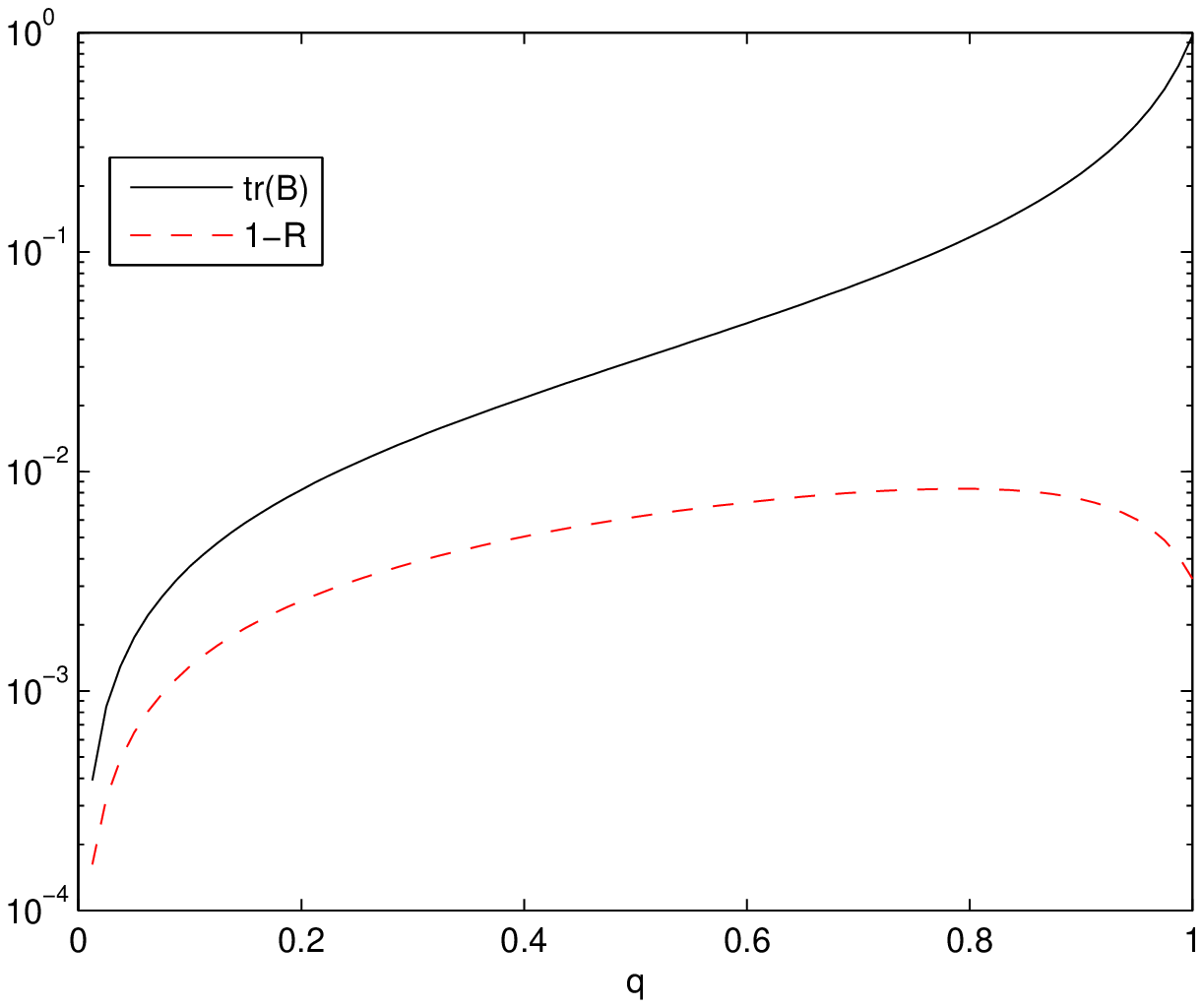}
\includegraphics[width=7cm]{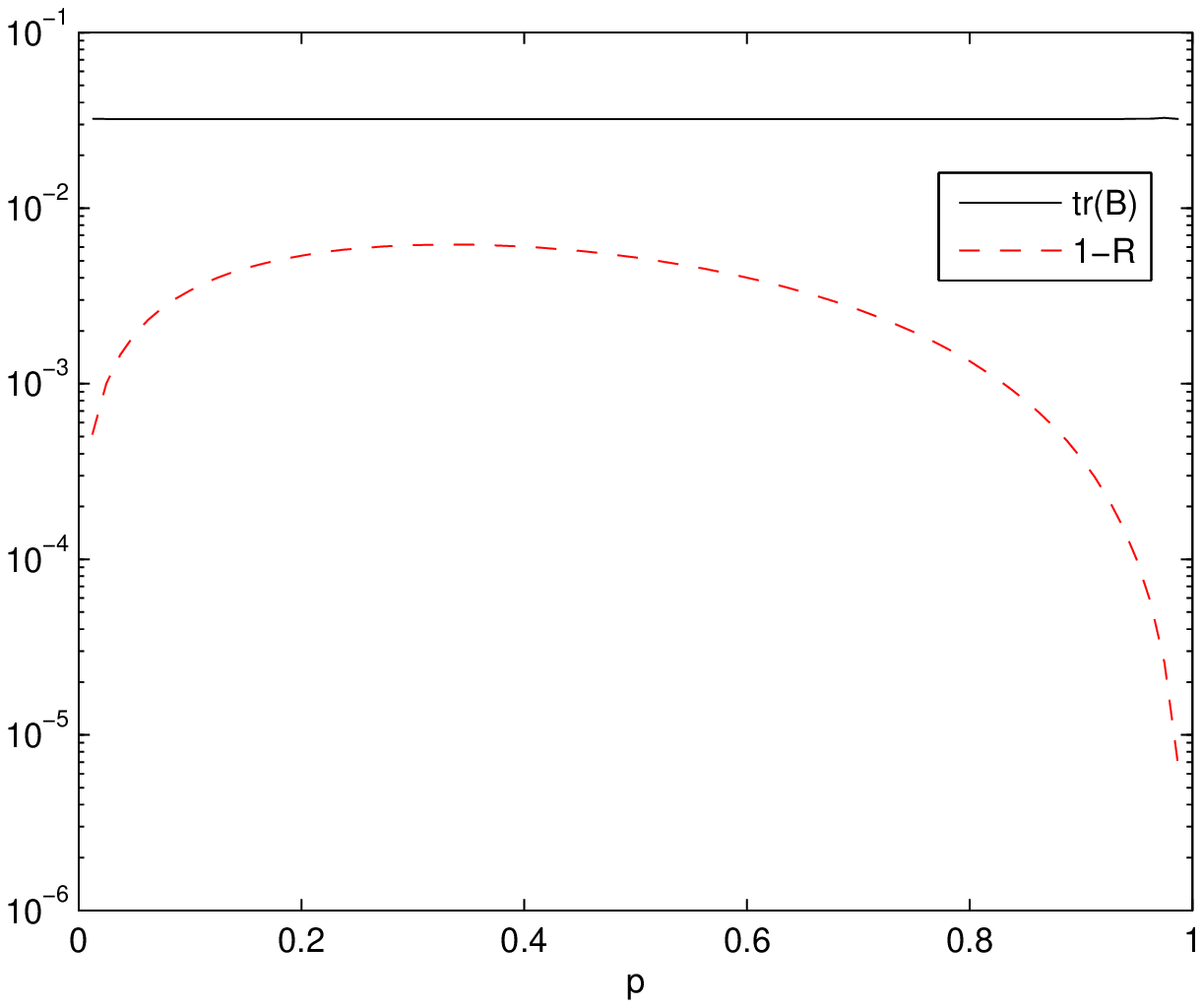}
\caption{Dependence in the CBGA of $\trace{(B)}$ and $R$ for the ring graph. Left plot assumes $N=30$, $p=1/3$, right plot assumes $N=30$, $q=1/2$.}\label{fig:ring-coll-BR}
\end{figure}

%% questo commento sembra poco utile a questo punto
%\begin{remark}[CBGA as a perturbation of BGA algorithm]
%Remark also that, mimicking~\eqref{eq:UBundirectedNoColl}, $\L(\Omega)$ can be written as
%\begin{align*}
%\L(\Omega)&=\Omega-2 q(1-q)p (1-p)^2 L-q^2 p(1-p)^2 N^{-1} L^2 +q^2 p(1-p)^2 N^{-1} p \circulant(\tau).
%\end{align*}
%Namely $\tau=[2 \beta'_0, \beta'_1 \beta'_2, \beta'_3, \beta'_4, 0, \ldots, 0, \beta'_4, \beta'_3, \beta'_2, \beta'_1]^*$ with \begin{align*}
%& \beta'_0=p-2,\\
%& \beta'_1=(2-p)^2+2\\
%&\beta'_2=-3(1+(1-p)^2)\\
%&\beta'_3= 2 p^2+(1-p)^2+1\\
%&\beta'_4=1-p.
%\end{align*}
%Remark that $\sum_{i=0}^{4}\beta'_i=0.$
%
%We recall that $\Pr[A_{ij}(t)=1]$ is equal for all $i,j,t$, and define $\probcomm:=\Pr[A_{ij}(t)=1]=p(1-p)^2$. Thus
%\begin{align}\label{eq:LOmegaRing}
%&\L(\Omega)=\Omega-2 q(1-q) \probcomm L-q^2 \probcomm N^{-1} L^2 - q^2 \probcomm N^{-1} p \circulant(\tau).
%\end{align}
%If we recall that, in the algorithm without collisions, $\Pr[A_{ij}(t)=1]=\frac{1}{N}$, then defining $\probcommNC=\frac{1}{N}$,
%the expression (\ref{eq:UBundirectedNoColl}) can be rewritten as
%\begin{align}
%&\L(\Omega)=\Omega-2 q(1-q) \probcommNC L-q^2 \probcommNC N^{-1} L^2. \label{eq:LOmegaRingNoColl}
%\end{align}
%Thus (\ref{eq:LOmegaRing}) is a perturbation of the quantity \eqref{eq:LOmegaRingNoColl} for the synchronous case.
%\end{remark}

%\clearpage
\section{Conclusion}\label{sec:conclusion}
This paper has been devoted to study gossip algorithm for the
estimation of averages, based on iterated broadcasting of current
estimates, focusing on their capability of providing an unbiased
estimation.
In this framework, we presented a novel broadcast gossip
algorithm, dealing with interference in communications, whose
impact is studied in the paper. Our results, obtained under
specific symmetry assumptions about the network topology, allow to
conjecture an interesting picture of the performance of broadcast
gossip algorithms on real world networks, in terms of achievable
precision and robustness to interference. Interferences have a
negative effect on the rate of convergence, which can be minimized
by a suitable choice of the broadcasting probability $p$. As known
for static consensus algorithms based on diffusion, the rate
degrades on large locally connected graphs. Instead, interferences
have minimal effect on the asymptotical error, which depends on
the network topology: on large highly connected graphs, the
algorithms provide biased estimations, whereas on locally
connected graphs the estimation bias goes to zero as the network
grows larger.

A better understanding of the role of the network topology in the
trade-off between speed and achievable precision might come from
the following extensions of our work. In this paper two
simultaneous technical assumptions have been made: the graphs are
Abelian Cayley, thus in particular vertex-transitive, and each
node broadcasts with the same probability. Future work should
consider non-vertex-transitive networks of nodes with non-uniform
broadcasting probabilities. Furthermore, it seems interesting to
study the performance of the algorithms on families of expander
graphs and small-world networks, which ensure a good scaling of
the rate of convergence, while keeping the degree of nodes
bounded.

\bigskip\noindent
{\bf Acknowledgements:} The authors wish to thank Sandro Zampieri
for many fruitful discussions on the issues studied in this paper.

%\bibliographystyle{plain}
%\bibliography{aliasFrasca,RefFrasca}

\begin{thebibliography}{10}

\bibitem{NA:85}
N.~Abramson.
\newblock {Development of the ALOHANET}.
\newblock {\em IEEE Transactions on Information Theory}, 31(2):119--123, 1985.

\bibitem{TCA-ADS-AGD:09}
T.~C. Aysal, A.~D. Sarwate, and A.~G. Dimakis.
\newblock Reaching consensus in wireless networks with probabilistic broadcast.
\newblock In {\em Allerton Conf. on Communications, Control and Computing},
  Allerton, IL, USA, September 2009.

\bibitem{TCA-MEY-ADS-AS:09}
T.~C. Aysal, M.~E. Yildiz, A.~D. Sarwate, and A.~Scaglione.
\newblock Broadcast gossip algorithms for consensus.
\newblock {\em IEEE Transactions on Signal Processing}, 57(7):2748--2761, 2009.

\bibitem{LB:79}
L.~Babai.
\newblock Spectra of {C}ayley graphs.
\newblock {\em J. Combin. Theory Ser. B}, 27(2):180--189, 1979.

\bibitem{FB-AGD-PT-MV:07}
F.~Benezit, A.~G. Dimakis, P.~Thiran, and M.~Vetterli.
\newblock Gossip along the way: Order-optimal consensus through randomized path
  averaging.
\newblock In {\em Allerton Conf. on Communications, Control and Computing},
  Monticello, IL, US, Sep 2007.

\bibitem{SB-AG-BP-DS:06}
S.~Boyd, A.~Ghosh, B.~Prabhakar, and D.~Shah.
\newblock Randomized gossip algorithms.
\newblock {\em IEEE Transactions on Information Theory}, 52(6):2508--2530,
  2006.

\bibitem{RC-AC-LS-SZ:08b}
R.~Carli, A.~Chiuso, L.~Schenato, and S.~Zampieri.
\newblock A {PI} consensus controller for networked clocks synchronization.
\newblock In {\em {IFAC} {W}orld {C}ongress}, pages 10289--10294, Seoul, Korea,
  July 2008.

\bibitem{RC-FG-SZ:09}
R.~Carli, F.~Garin, and S.~Zampieri.
\newblock Quadratic indices for the analysis of consensus algorithms.
\newblock In {\em 4th Information Theory and Applications Workshop}, San Diego,
  CA, February 2009.

\bibitem{GC-MD-SG-NL-CN-TN:08}
G.~Chockler, M.~Demirbas, S.~Gilbert, N.~Lynch, C.~Newport, and T.~Nolte.
\newblock Consensus and collision detectors in radio networks.
\newblock {\em Distributed Computing}, 21(1):55--84, 2008.

\bibitem{PD-FB-PT-MV:08}
P.~Denantes, F.~Benezit, P.~Thiran, and M.~Vetterli.
\newblock Which distributed averaging algorithm should {I} choose for my sensor
  network?
\newblock In {\em IEEE Conference on Computer Communications (INFOCOM)}, pages
  986--994, April 2008.

\bibitem{PD-LSC:93}
P.~Diaconis and L.~Saloff-Coste.
\newblock Comparison theorems for reversible {M}arkov chains.
\newblock {\em The Annals of Applied Probability}, 3(3):696--730, 1993.

\bibitem{AGD-ADS-MJW:08}
A.~G. Dimakis, A.~D. Sarwate, and M.~J. Wainwright.
\newblock Geographic gossip: Efficient averaging for sensor networks.
\newblock {\em IEEE Transactions on Signal Processing}, 56(3):1205--1216, 2008.

\bibitem{OD-FB-PT:05}
O.~Dousse, F.~Baccelli, and P.~Thiran.
\newblock {Impact of interferences on connectivity in Ad Hoc Networks}.
\newblock {\em {IEEE-ACM} {T}ransactions on {N}etworking},
  {13}({2}):{425--436}, 2005.

\bibitem{OD-MF-NM-RM-PT:06}
O.~Dousse, M.~Franceschetti, N.~Macris, R.~Meester, and P.~Thiran.
\newblock {Percolation in the signal to interference ratio graph}.
\newblock {\em Journal of {A}pplied {P}robability}, {43}({2}):{552--562}, 2006.

\bibitem{FF-JCD:10}
F.~Fagnani and J.-C. Delvenne.
\newblock Democracy in {M}arkov chains and its preservation under local
  perturbations.
\newblock In {\em {IEEE} Conf. on Decision and Control}, 2010.
\newblock Submitted.

\bibitem{FF-SZ:07ita}
F.~Fagnani and S.~Zampieri.
\newblock Randomized consensus algorithms over large scale networks.
\newblock In {\em Information Theory and Applications Workshop}, San Diego, CA,
  2007.

\bibitem{FF-SZ:08a}
F.~Fagnani and S.~Zampieri.
\newblock Randomized consensus algorithms over large scale networks.
\newblock {\em IEEE Journal on Selected Areas in Communications},
  26(4):{634--649}, 2008.

\bibitem{FF-SZ:08}
F.~Fagnani and S.~Zampieri.
\newblock Average consensus with packet drop communication.
\newblock {\em SIAM Journal on Control and Optimization}, 48(1):102--133, 2009.

\bibitem{JAF:91}
J.~A. Fill.
\newblock Eigenvalue bounds on convergence to stationarity for nonreversible
  {M}arkov chains, with an application to the exclusion process.
\newblock {\em The Annals of Applied Probability}, 1(1):62--87, 1991.

\bibitem{MF-RM:07}
M.~Franceschetti and R.~Meester.
\newblock {\em Random networks for communication}.
\newblock {Cambridge University Press}, 2007.

\bibitem{FG-SZ:09}
F.~Garin and S.~Zampieri.
\newblock Performance of consensus algorithms in large-scale distributed
  estimation.
\newblock In {\em {E}uropean {C}ontrol {C}onference}, Budapest, Hungary, August
  2009.

\bibitem{YWH-AS:06}
Y.~W. Hong and A.~Scaglione.
\newblock Energy-efficient broadcasting with cooperative transmission in
  wireless sensor networks.
\newblock {\em {IEEE Transactions on Wireless Communications}},
  5(10):2844--2855, 2006.

\bibitem{DK-AD-JG:03}
D.~Kempe, A.~Dobra, and J.~Gehrke.
\newblock Gossip-based computation of aggregate information.
\newblock In {\em IEEE Symposium on Foundations of Computer Science}, pages
  482--491, Washington, DC, October 2003.

\bibitem{SK-AS-RJT:07}
S.~Kirti, A.~Scaglione, and R.~J. Thomas.
\newblock A scalable wireless communication architecture for average consensus.
\newblock In {\em {IEEE} Conf. on Decision and Control}, pages 32--37, New
  Orleans, LA, December 2007.

\bibitem{QL-DR:06}
Q.~Li and D.~Rus.
\newblock Global clock syncronization in sensor networks.
\newblock {\em IEEE Transactions on Computers}, 55(2):214-- 226, 2006.

\bibitem{DM-DS:08}
D.~Mosk-Aoyama and D.~Shah.
\newblock Fast distributed algorithms for computing separable functions.
\newblock {\em IEEE Transactions on Information Theory}, 54(7):2997--3007,
  2008.

\bibitem{BN-AGD-MG:08}
B.~Nazer, A.~G. Dimakis, and M.~Gastpar.
\newblock Local interference can accelerate gossip algorithms.
\newblock In {\em Allerton Conf. on Communications, Control and Computing},
  Allerton, IL, USA, September 2008.

\bibitem{SR:07}
S.~Rai.
\newblock The spectrum of a random geometric graph is concentrated.
\newblock {\em {Journal of Theoretical Probability}}, 20(2):119--132, 2007.

\bibitem{BR-RDA:04}
B.~Recht and R.~{D'Andrea}.
\newblock Distributed control of systems over discrete groups.
\newblock {\em IEEE Transactions on Automatic Control}, 49(9):1446--1452, 2004.

\bibitem{VS-MA-OS:06}
V.~Saligrama, M.~Alanyali, and O.~Savas.
\newblock Distributed detection in sensor networks with packet losses and
  finite capacity links.
\newblock {\em IEEE Transactions on Signal Processing}, 54(11):4118--4132,
  2006.

\bibitem{AT:99}
A.~Terras.
\newblock {\em Fourier analysis on finite groups and applications}, volume~43
  of {\em London Mathematical Society Student Texts}.
\newblock Cambridge University Press, Cambridge Ma, 1999.

\end{thebibliography}

\end{document}